\theoremstyle{plain}
\newtheorem{theorem}{Theorem}[section]
\newtheorem{lem}[theorem]{Lemma}
\newtheorem{prop}[theorem]{Proposition}
\newtheorem{col}[theorem]{Corollary}
\theoremstyle{definition}
\newtheorem{defn}[theorem]{Definition}
\newtheorem{note}[theorem]{Note}
\newtheorem{example}[theorem]{Example}
\newtheorem{notation}[theorem]{Notation}
\newcommand{\XX}{{\mathcal{X}}}
\newcommand{\YY}{{\mathcal{Y}}}
\newcommand{\EE}{{\mathcal{E}}}
\newcommand{\CC}{{\mathcal{C}}}
\newcommand{\D}{\Delta}
\newcommand{\st}{\ : \ }
\newcommand{\ver}{\mbox{Vert}}
\newcommand{\void}[1]{}
\newcommand{\idiot}[1]{\vspace{5 mm}\par \noindent

\marginpar{\textsc{Note}}

\framebox{\begin{minipage}[c]{0.95 \textwidth}

\tt #1 \end{minipage}}\vspace{5 mm}\par}
\renewcommand{\idiot}[1]{}
\numberwithin{equation}{section}
\title{On the resolution of path ideals of cycles}
\begin{document}

\author{Ali Alilooee\thanks{Department of Mathematics and Statistics, Dalhousie University, Halifax, Canada, alilooee@mathstat.dal.ca.}
\and Sara Faridi\thanks{Department of Mathematics and Statistics, Dalhousie University, Halifax, Canada, faridi@mathstat.dal.ca.}}

\maketitle 
\begin{abstract}
We give a formula to compute all the top degree graded Betti numbers
of the path ideal of a cycle. Also we will find a criterion to
determine when Betti numbers of this ideal are non zero and give a
formula to compute its projective dimension and regularity.
\end{abstract}


\section{Introduction}


Path ideals of graphs were first introduced by Conca and De
Negri~\cite{Conca1999} in the context of monomial ideals of linear
type.  Simply put, path ideals are ideals whose monomial generators
correspond to vertices in paths of a given length in a graph. Conca
and De Negri showed that path ideals of trees have normal and
Cohen-Macaulay Rees rings. More recently Bouchat, H\`{a} and
O'Keefe~\cite{R.Bouchat2010} and He and Van Tuyl~\cite{He2010} studied
invariants related to resolutions of path ideals of certain graphs.
In his thesis, Jacques~\cite{Jacques2004} used beautiful techniques to
compute Betti numbers of edge ideals of several classes of
graphs. Edge ideals can be considered as path ideals of length $2$.
Our paper extends Jacques's techniques to higher
dimensions. Essentially, we consider the path ideal of a graph as a
disjoint union of connected components. We then use homological
methods to glue these components back together, and using Hochster's
formula we compute all top-degree graded Betti numbers, projective
dimension and regularity of path ideals of cycles.


The paper is organized as follows. In Section 2 we recall some
notation and basic algebraic and combinatorial concepts used in other
next chapters. In Section 3 we study the connected components of path
ideals which will provide us with the key to our homological
computations later in Section 4. Section 5 is where we apply the
homological results of Section 4 along to give a criterion to determine all non zero Betti numbers and projective dimension of path ideals
of cycles.
 While working on this paper the computer algebra systems
 CoCoA~\cite{CoCoA} and Macaulay2~\cite{Macaulay2} were used to test
 examples. We acknowledge the immense help that they have provided us
 in this project.


\section{Preliminaries}


Throughout we assume that $K$ is a field and $R=K\left[x_1,\dots,x_n\right]$ is a polynomial ring in $n$ variables.

\subsection*{Simplicial complexes and monomial ideals}

\begin{defn} An abstract \textbf{simplicial complex} on vertex set
 $\XX=\{x_1,\dots,x_n\}$ is a collection $\D$
  of subsets of $\XX$ satisfying
  \renewcommand{\theenumi}{\roman{enumi}}
\begin{enumerate}
\item $\{x_i\}\in \D$ for all $i$,
\item $F \in \D , G \subset F \Longrightarrow G \in \D$.
\end{enumerate}
The elements of $\D$ are called \textbf{faces} of $\D$ and the
maximal faces under inclusion are called \textbf{facets} of
$\D$. We denote the simplicial complex $\D$ with facets
$F_1,\dots,F_s$ by $\langle F_1,\dots,F_s\rangle$. We call
$\{F_1,\dots,F_s\}$ the facet set of $\D$ and is
denoted by $F(\D)$. The vertex set of $\D$ is denoted by $\ver(\D)$.
\end{defn}

\begin{defn} A \textbf{subcollection} of a simplicial complex $\D$
with vertex set $\XX$ is a simplicial complex whose facet set is a
subset of the facet set of $\D$. For $\YY\subseteq\XX$, an
\textbf{induced subcollection} of $\D $ on $\YY$, denoted by
$\D_{\YY}$, is the simplicial complex whose vertex set is a subset of
$\YY$ and facet set is $$\{F\in F(\D) \st F \subseteq {\YY}\}.$$
\end{defn}
If $F$ is a face of $\D=\langle F_1,\dots,F_s\rangle$,
we define the \textbf{complement} of $F$ in $\D$ to be
\begin{eqnarray*}
F_\XX^c=\XX\setminus F & \mbox{and}& \D_\XX^c=\langle (F_1)^c_\XX,\dots,(F_s)^c_\XX \rangle.
\end{eqnarray*}
Also if $\XX\subsetneqq \ver (\D)$, then
$\D^{c}_\XX=(\D_\XX)^{c}_\XX$.
\begin{defn} Let $R=K\left[x_1,\dots,x_n\right]$ be a
polynomial ring over a field $K$, and  $I$ an ideal in $R$ minimally
generated by square-free monomials $m_1,\ldots,m_s$.  One can associate
two simplicial complexes to $I$.
\renewcommand{\theenumi}{\roman{enumi}}
\begin{enumerate}
\item The \textbf{Stanley-Reisner complex} $\D_I$ associated
  to $I$ has vertex set $V=\{x_i \st x_i \notin I\}$ and is defined as $$\D_I = \{\{x_{i_{1}},\dots,x_{i_{k}}\} \st i_1 < i_2 < \dots < i_k, x_{i_{1}}\dots x_{i_{k}}\notin I \}.$$
\item The \textbf{facet complex}
  $\D(I)$ associated to $I$ has vertex set
 $\{x_1,\dots,x_n\}$ and is defined as
$$\D(I)=\langle F_1,\ldots,F_s \rangle \mbox{ where }
  F_i=\{x_j \st x_j | m_i,\ 1\leq j\leq n\}, \ 1 \leq i \leq s.$$
\end{enumerate}
\end{defn}
Conversely to a simplicial complex $\D$ one can associate two
monomial ideals.
\begin{defn} Let $\D$  be a simplicial complex with vertex set
$x_1,\ldots,x_n$ and $R=K\left[x_1,\dots,x_n\right]$ be a polynomial
  ring over a field $K$.
\renewcommand{\theenumi}{\roman{enumi}}
\begin{enumerate}
\item The \textbf{Stanley-Reisner} ideal of $\D$ is defined
as $$I_{\D}=( \prod_{x \in F}x  \st  F \notin \D).$$
\item   The \textbf{facet ideal} of $\D$ is defined as
$$I(\D)=( \prod_{x \in F}x \st \ F \mbox{ is a facet of} \D ).$$
\end{enumerate}
\end{defn}
Note that there is a one-to-one correspondence between monomial ideals
and simplicial complexes via each of these methods.
\begin{defn} Let $\D$ be a simplicial complex with vertex set
$\XX$. The \textbf{Alexander Dual} $\D^* $ is defined to be the
  simplicial complex with faces
$$\D^{*}=\{F^{c}_\XX   \st F \mbox{ is not face of } \D \}.$$
\end{defn}
To prove some of the results in this paper we need the definition of
a cone and its properties.
\begin{defn} Let $\D_1$ and $\D_2$ be two simplicial complexes on
disjoint vertex sets V and W. The \textbf{join} $\D_1 * \D_2$
is the simplicial complex on $ V\bigsqcup W$ with faces $F \cup G$
where $F\in \D_1, G \in \D_2$. The \textbf{cone}
$Cn(\D)$ of $\D$ is the simplicial complex
$\omega*\D$, where $\omega$ is a new vertex.
\end{defn}
Given a simplicial complex $\D$, we denote by ${\CC }.(\D)$ the
reduced chain complex and by $\widetilde{H}_i(\D)=Z_i(\D) /B_i(\D)$
the $i$ - th reduced homology groups of $\D$ with coefficients in
the field $K$.
For the proof of the following fact, see for example
Villarreal~\cite{Villarreal2001}, Proposition 5.2.5.
\begin{prop}\label{prop:prop2.16} If $\D$ is a simplicial complex
we have $\widetilde{H}_i(Cn(\D))=0$ for all $i$.
\end{prop}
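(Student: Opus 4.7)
The plan is to construct an explicit chain contraction of the reduced (augmented) chain complex $\CC_{\bullet}(Cn(\Delta))$; its existence shows that the identity map is null-homotopic and therefore induces the zero map on every reduced homology group, forcing $\widetilde{H}_i(Cn(\Delta)) = 0$ for all $i$.

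First I would order the vertices of $Cn(\Delta) = \omega * \Delta$ so that the apex $\omega$ comes first, and then define $K$-linear maps $s_i : \CC_i(Cn(\Delta)) \to \CC_{i+1}(Cn(\Delta))$ on basis elements (oriented simplices) by
\[
s_i([v_0, \dots, v_p]) = \begin{cases} [\omega, v_0, \dots, v_p] & \text{if } \omega \notin \{v_0, \dots, v_p\}, \\ 0 & \text{if } \omega \in \{v_0, \dots, v_p\}, \end{cases}
\]
together with $s_{-1}(\emptyset) = [\omega]$. This is well-defined because $Cn(\Delta)$ is closed under adjoining $\omega$ to any face of $\Delta$. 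The heart of the argument is then the chain homotopy identity $\partial \circ s + s \circ \partial = \operatorname{id}$, which I would verify by splitting on whether a basis simplex $F$ contains $\omega$. If $\omega \notin F$, the standard boundary formula gives $\partial s(F) = F - s(\partial F)$ directly. If $F = [\omega, v_0, \dots, v_p]$, then $s(F) = 0$, and when $\partial F$ is expanded only the single term $[v_0, \dots, v_p]$ (the one from which $\omega$ has been deleted) survives under $s$, because the remaining summands still contain $\omega$ and are killed; this produces exactly $F$ back. In both cases $\partial s(F) + s \partial F = F$, and the degree $-1$ check $\partial s_{-1}(\emptyset) = \partial [\omega] = \emptyset$ takes care of the bottom of the complex.

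With the homotopy identity in place, the conclusion is immediate: any reduced cycle $z \in Z_i(Cn(\Delta))$ satisfies $z = \partial s(z) + s \partial z = \partial s(z)$, so $z \in B_i(Cn(\Delta))$ and $\widetilde{H}_i(Cn(\Delta)) = 0$ for every $i$. The only place where real care is needed is the sign bookkeeping in the two cases above; once those signs are reconciled everything else is purely formal, so this is essentially a one-line topological fact (cones are contractible) dressed up in the language of simplicial chains.
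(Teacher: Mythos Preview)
Your argument is correct: the explicit cone operator $s$ gives a chain homotopy between the identity and zero on the augmented complex, and your case split handles the sign bookkeeping properly. Note, however, that the paper does not supply its own proof of this proposition at all --- it simply cites Villarreal, Proposition~5.2.5, as a reference for this standard fact. The chain-contraction argument you wrote out is exactly the classical proof that cones are acyclic (and is essentially what one finds in Villarreal or any textbook treatment), so there is nothing to compare beyond observing that you have filled in what the paper left as a citation.
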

\subsection*{Betti numbers}
 For any homogeneous ideal $I$ of the polynomial ring
 $R=K\left[x_1,\dots,x_n\right]$ there exists a \textbf{graded minimal
   finite free resolution}
$$0\rightarrow {\displaystyle
  \bigoplus_{d}}R(-d)^{\beta_{p,d}}\rightarrow\cdots{\displaystyle
   \rightarrow\bigoplus_{d}}R(-d)^{\beta_{1,d}}\rightarrow
 R\rightarrow R /I \rightarrow 0$$ of $R /I $ in which
 $R(-d)$ denotes the graded free module obtained by
 shifting the degrees of elements in $R$ by $d$. The numbers
 $\beta_{i,d}$, which we shall refer to as the $i$-th
$\mathbb{N}$-\textbf{graded Betti numbers} of degree $d$ of $R /I$,
 are independent of the choice of graded minimal finite free
 resolution.

For computing the $\mathbb{N}$-graded Betti numbers of the
Stanley-Reisner ring of a simplicial complex we use an equivalent form
of Hochster's formula.
\begin{theorem}\label{col:col15} Let $R=K[x_1,\dots,x_n]$ be a polynomial
ring over a field $K$, and $I$ be a pure square-free monomial ideal in
$R$. Then the $\mathbb{N}$-graded Betti numbers of $R/I$ are given
by $$\beta_{i,d}(R/I)={\displaystyle \sum_{\Gamma \subset \D(I),
   |\ver(\Gamma)|=d}} \hspace{.1 in} \dim_{K}\widetilde{H}_{i-2}
(\Gamma^c_{\ver (\Gamma)})$$ where the sum is taken over the induced subcollections $\Gamma$ of $\D(I)$ which have $d$ vertices.
\end{theorem}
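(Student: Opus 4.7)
The plan is to deduce this statement from the classical form of Hochster's formula applied to the Stanley--Reisner complex $\D_I$, by combining Alexander duality with a vanishing argument that eliminates all subsets $W\subseteq\XX$ containing an ``isolated'' vertex.

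Starting from the classical Hochster formula
$$\beta_{i,d}(R/I)=\sum_{W\subseteq\XX,\ |W|=d}\dim_K\widetilde{H}_{d-i-1}((\D_I)_W;K),$$
I would first observe that the terms corresponding to $W$ containing an \emph{isolated vertex} $v$---meaning a $v\in W$ that lies in no facet of $\D(I)$ contained in $W$---vanish. Indeed, any face $F\in(\D_I)_W$ with $v\notin F$ can be extended to $F\cup\{v\}$: any putative $F_j\subseteq F\cup\{v\}$ with $F_j\subseteq W$ would have to miss $v$ and so satisfy $F_j\subseteq F$, contradicting $F\in\D_I$. Thus $(\D_I)_W$ is a cone with apex $v$, and Proposition~\ref{prop:prop2.16} forces its reduced homology to vanish. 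The surviving $W$ are exactly those with $W=\ver((\D(I))_W)$, and these are in bijection with induced subcollections $\Gamma$ of $\D(I)$ via $\Gamma\mapsto\ver(\Gamma)$.

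The main technical step is to identify the Alexander dual of $(\D_I)_{\ver(\Gamma)}$ on the vertex set $\ver(\Gamma)$ with the complement $\Gamma^c_{\ver(\Gamma)}$. For $W=\ver(\Gamma)$, the minimal non-faces of $(\D_I)_W$ are precisely the facets of $\Gamma=(\D(I))_W$, since these correspond to the minimal monomial generators of $I$ whose support is contained in $W$. Hence the Alexander dual has facet set $\{\ver(\Gamma)\setminus F_j : F_j\in F(\Gamma)\}$, which coincides by definition with $F(\Gamma^c_{\ver(\Gamma)})$. Alexander duality for a simplicial complex on $d$ vertices then yields
$$\dim_K\widetilde{H}_{d-i-1}((\D_I)_{\ver(\Gamma)};K)=\dim_K\widetilde{H}_{i-2}(\Gamma^c_{\ver(\Gamma)};K),$$
and substituting into the reindexed Hochster sum gives the claimed formula.

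The main obstacle will be the bookkeeping in the Alexander duality step, namely verifying that the paper's combinatorial ``complement'' $\Gamma^c_{\ver(\Gamma)}$---obtained by complementing each facet of $\Gamma$ inside $\ver(\Gamma)$---really agrees with the simplicial Alexander dual of $(\D_I)_{\ver(\Gamma)}$ on the same vertex set. This hinges on the observation that, because the $m_j$ form a minimal square-free generating set of $I$, the minimal non-faces of the induced Stanley--Reisner complex $(\D_I)_W$ are exactly the facets of the induced facet subcollection $(\D(I))_W$.
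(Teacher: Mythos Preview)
Your proposal is correct and follows essentially the same approach as the paper: Hochster's formula, Alexander duality to pass from $\widetilde{H}_{d-i-1}((\D_I)_W)$ to $\widetilde{H}_{i-2}$ of the dual, identification of the dual with $(\D(I)_W)_W^c$, and a cone argument to discard subsets $W$ with $\ver(\D(I)_W)\subsetneq W$. The only cosmetic differences are that the paper applies the cone-vanishing step \emph{after} dualizing (observing that $(\D(I)_W)_W^c$ is a cone on the extra vertex) rather than before, and verifies the identity $((\D_I)_W)^*=(\D(I)_W)_W^c$ by a direct element-chasing chain of equivalences rather than via the minimal-non-face description you use.
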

    \begin{proof} Hochster's formula (see for example Corollary~5.12
      of~\cite{Miller2005}) says
        $$\beta_{i,d}(R/I)=\sum_{W \subset \ver(\D_I),|W|=d}
       \dim_{K}\widetilde{H}_{d-i-1}(\{\D_I\}_{W})$$ where
       $\{\D_I\}_{W}=\{F\in \D_I \st F\subset W\}$. On the
       other hand from \cite{Bruns1993} Lemma~5.5.3 we have
    \begin{equation}
      \widetilde{H}_{d-i-1}(\{\D_I\}_{W})\cong
      \widetilde{H}^{i-2}({\{\D_I\}_{W}}^{*})\cong
      \widetilde{H}_{i-2}({\{\D_I\}_{W}}^{*}).
      \label{eqn:orginalhochster}
     \end{equation}

     Suppose $m_1,m_2,\dots ,m_r$ is a minimal monomial generating set
     for $I$ and correspondingly, $\D(I)=\langle
    F_1,\ldots,F_s\rangle$. We now claim
     ${\{\D_I\}_{W}}^{*}=\left(\D(I)_{W}\right)_W^{c}$ for $W\subset
     \ver(\D_I)$.
   $$\begin{array}{ll}
     F\in {\{\D_I\}_W}^{*} &\Longleftrightarrow W\setminus F
      \notin \{\D_I\}_{W}\\
      & \Longleftrightarrow  W\backslash F\notin \D_I\\
     & \Longleftrightarrow \displaystyle\prod_{x\in W\setminus F}x \in
        I=(m_1,m_2,\dots,m_r)\\
     &\Longleftrightarrow \displaystyle  m_s| \prod_{x\in W\setminus F}x,
       \mbox{ for some } s \in \{1,\ldots,r\} \\
    &\Longleftrightarrow F_{s}\subset W\backslash F\subset W,
       \mbox{ for some } s \in \{1,\ldots,r\} \\
     &\Longleftrightarrow F\subset W\backslash F_{s}\in (\D(I)_{W})_W^{c},
        \mbox{ for some } s \in \{1,\ldots,r\}.
\end{array}$$

    Now Hochster's formula and (\ref{eqn:orginalhochster}) imply that
     \begin{equation*}
       \beta_{i,d}(R/I)=\displaystyle\sum_{W\subset
         \ver(\D),|W|=d}dim_{K}\widetilde{H}_{i-2}((\D(I)_{W})_W^{c}).
     \end{equation*}
   If we assume $\ver(\D(I)_{W})\neq W$ then clearly we have
   $(\D(I)_{W})_W^c$ is a cone and by Proposition~\ref{prop:prop2.16}
   it contributes zero to the sum. So we have
   $$\beta_{i,d}(R/I)=\sum_{\Gamma\subset \D(I),|\ver(\Gamma)|=d}
   dim_{K}\widetilde{H}_{i-2}(\Gamma_{\ver(\Gamma)}^{c}).$$ where the
   sum is taken over the induced subcollections $\Gamma$ of $\D(I)$
  which have $d$ vertices.
\end{proof}

Based on Theorem~\ref{col:col15}, from here on all induced
subcollections $\Gamma=\D_\YY$ of a simplicial complex $\D$ that we
consider will have the property that $\YY=\ver (\Gamma)$.

\section{Path ideals and runs}

We now focus on path ideals, path complexes, and their structures.
\begin{defn}
Let $G=(\XX,E)$ be a finite simple graph and $t$ be an integer such
that $t\geq 2$.  If $x$ and $y$ are two vertices of $G$, a
\textbf{path} of length $(t-1)$ from $x$ to $y$ is a
sequence of vertices $x=x_{i_1},\dots, x_{i_{t}}=y$ of $G$ such that
$\{x_{i_{j}},x_{i_{j+1}}\}\in E$ for all $j= 1,2,\dots,t-1$.
We define the {\bf path ideal} of $G$, denoted by $I_t(G)$ to be the
ideal of $K[x_1,\dots,x_n]$ generated by the monomials of the form
$x_{i_1}x_{i_2}\dots x_{i_t}$ where $x_{i_1},x_{i_2},\dots,x_{i_t}$ is
a path in $G$.
The facet complex of $I_t(G)$, denoted by $\D_t(G)$, is called the
{\bf path complex} of the graph $G$.
\end{defn}
Two special cases that we will be considering in this paper are when
$G$ is a {\bf cycle} $C_n$, or a {\bf line graph} $L_n$ on vertices
$\{x_1,\dots,x_n\}$.
$$C_n=\langle x_1x_2,\ldots,x_{n-1}x_n,x_nx_1\rangle \mbox{\ and \ }
L_n=\langle x_1x_2,\ldots,x_{n-1}x_n\rangle.$$
\begin{example} Consider the cycle $C_7$ with vertex set
 $\XX=\{x_1,\dots,x_7\}$
\begin{center}
\end{center}
Then we have
\begin{align*}
&I_4(C_7)=(x_1x_2x_3x_4,x_2x_3x_4x_5,x_3x_4x_5x_6,x_4x_5x_6x_7,x_1x_5x_6x_7,x_1x_2x_6x_7,x_1x_2x_3x_7)\\
&\D_4 (C_7)=\langle\{x_1,x_2,x_3,x_4\},\{x_2,x_3,x_4,x_5\},\{x_3,x_4,x_5,x_6\},\{x_4,x_5,x_6,x_7\},\{x_1,x_5,x_6,x_7\},\\
&\{x_1,x_2,x_6,x_7\},\{x_1,x_2,x_3,x_7\}\rangle.
\end{align*}

\end{example}

\begin{notation} Let $i$ and $n$ be two positive integers.
For (a set of) labeled objects we use the notation $\mod n$ to denote
$$x_i \mod n \ =\{x_j \st 1\leq j \leq n, i\equiv j \mod n\}$$
and
$$\{x_{u_1},x_{u_2},\dots,x_{u_t}\} \mod n\  =\{x_{u_j}\mod n \st j=1,2,\dots,n\}.$$
\end{notation}
\begin{note}\label{not:not1} Let $C_n$ be a cycle on vertex set
$\XX=\{x_1,\dots,x_n\}$ and $t< n$. The facets of the path complex
  $\D_t(C_n)=\langle F_1,\dots,F_n\rangle$ can be labeled as
  $$F_1=\{x_1,\dots,x_t\},\dots,F_{n-(t-1)}=\{x_{n-(t-1)},\dots,x_n\},\dots,F_{n}=\{x_1,\dots,x_{t-1},x_{n}\}$$
  such that $F_i=\{x_i,x_{i+1},\dots,x_{i+t-1}\}\mod n$ for all $1\leq
 i \leq n$.  This labeling is called the {\bf standard labeling} of
  $\D_t(C_n)$.

Since for each $1\leq i \leq n$ we have $$\begin{array}{llll}
   F_{i+1}\setminus F_{i}=\{x_{t+i}\}&\mbox{and}& F_{i}\setminus
   F_{i+1}=\{x_{i}\}&\mod n,
\end{array}$$
it follows that $\begin{array}{lllll} \left|F_i\setminus
  F_{i+1}\right|=1&\mbox{and}& \left|F_{i+1}\setminus
  F_{i}\right|=1&\mod n&\mbox{for all $1\leq i\leq
    n-1$}.
\end{array}$
\end{note}

It is clear that each induced subgraph of a graph-cycle is a disjoint union
of paths.  Borrowing the terminology from
S. Jacques~\cite{Jacques2004}, we call the path complex of a line a
``run'', and show that every induced subcollection of the path complex
of a cycle is a disjoint union of runs.
\begin{defn}\label{defn:defn3.5} Given an integer $t$, we define a
{\bf run} to be the path complex of a line graph. A run which has $p$
facets is called a \textbf{run of length $p$} and corresponds to
$\D_t(L_{p+t-1})$. Therefore a run of length $p$ has $p+t-1$ vertices.
\end{defn}

Proposition~\ref{prop:prop3.3} below shows that every proper induced
subcollection  of a path complex is a disjoint union of runs.

\begin{prop}\label{prop:prop3.3} Let $C_n$ be a cycle with
 vertex set $\XX=\{x_1,\dots.x_n\}$ and $2\leq t < n$. Let $\Gamma$ be
a proper induced connected subcollection of $\D_t(C_n)$ on
 $U\subsetneqq \XX$. Then $\Gamma$ is of the form
 $\D_t(L_{|\Gamma|})$, where $L_{|\Gamma|}$ is the line graph on
 $|\Gamma|$ vertices.
\end{prop}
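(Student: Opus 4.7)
The plan is to use $U \subsetneqq \XX$ to cut the cycle at a missing vertex, reducing the problem to a statement about the path complex of a line graph, and then to exploit the ``induced'' and ``connected'' hypotheses to force the vertex set of $\Gamma$ to be a contiguous block.

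First, pick $x_k \in \XX \setminus U$. No facet of $\Gamma$ contains $x_k$, so by the standard labeling of Note~\ref{not:not1} the facets of $\Gamma$ lie among the $n-t$ facets $F_{k+1}, F_{k+2}, \ldots, F_{k-t+1}$ of $\D_t(C_n)$ that avoid $x_k$. Relabel the cyclic list $x_{k+1}, x_{k+2}, \ldots, x_{k-1}$ (indices mod $n$) as $y_1, y_2, \ldots, y_{n-1}$; these $n-t$ facets are then precisely the facets of $\D_t(L_{n-1})$ on $\{y_1,\ldots,y_{n-1}\}$. Thus $\Gamma$ becomes an induced, connected subcollection of $\D_t(L_{n-1})$ with $\ver(\Gamma) = U$, and it suffices to show $\Gamma = \D_t(L_{|U|})$.

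Next, let $a$ and $b$ be the smallest and largest indices with $y_a, y_b \in U$. Because $y_a \in \ver(\Gamma)$, some facet of $\Gamma$ contains $y_a$; in $\D_t(L_{n-1})$ such a facet has the form $\{y_i,\ldots,y_{i+t-1}\}$ with $i \leq a$, and minimality of $a$ forces $i = a$. Hence $\{y_a,\ldots,y_{a+t-1}\} \subseteq U$, and symmetrically $\{y_{b-t+1},\ldots,y_b\} \subseteq U$. The crux is to show every $y_l$ with $a < l < b$ also lies in $U$. Suppose toward a contradiction $y_l \notin U$; the two inclusions above force $l \geq a+t$ and $l \leq b-t$. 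Since no facet of $\Gamma$ contains $y_l$, every facet of $\Gamma$ lies entirely in $\{y_1,\ldots,y_{l-1}\}$ or entirely in $\{y_{l+1},\ldots,y_{n-1}\}$. The facet $\{y_a,\ldots,y_{a+t-1}\}$ is of the first kind (since $a+t-1 \leq l-1$) and $\{y_{b-t+1},\ldots,y_b\}$ of the second (since $b-t+1 \geq l+1$); these two classes of facets share no vertex, so $\Gamma$ is disconnected, contradicting the hypothesis.

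Therefore $U = \{y_a, y_{a+1}, \ldots, y_b\}$ is contiguous. Since $\Gamma$ is induced, it then consists of exactly the facets $\{y_i,\ldots,y_{i+t-1}\}$ for $a \leq i \leq b-t+1$, which after relabeling is $\D_t(L_{b-a+1}) = \D_t(L_{|U|})$, as required. The main obstacle is the index bookkeeping in the contradiction step—ruling out every possible location of a missing interior vertex via the constraints involving $t$—but the underlying picture (a missing interior vertex breaks the run into two pieces with no shared vertex) keeps the argument transparent.
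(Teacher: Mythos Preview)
Your argument is correct and follows essentially the same route as the paper's proof: both show that connectedness together with the induced hypothesis forces $U$ to be a single contiguous arc on the cycle, and then identify $\Gamma$ with the path complex of the line on that arc; your preliminary step of cutting at a missing vertex to unroll into $\D_t(L_{n-1})$ is a cosmetic variant of the paper's direct handling of cyclic indices. One small index slip: the facets of $\D_t(C_n)$ avoiding $x_k$ are $F_{k+1},\ldots,F_{k-t}$ (mod $n$), not ending at $F_{k-t+1}$, since $F_{k-t+1}=\{x_{k-t+1},\ldots,x_k\}$ still contains $x_k$.
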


   \begin{proof} Suppose $\D_t(C_n)=\langle F_1,\dots,F_n\rangle$ has
     standard labeling and $\Gamma=\langle
     F_{i_1},\dots,F_{i_r}\rangle$.  It is clear that there exists the
     facet $F_{a}\in \Gamma$ for $1\leq a \leq n$ such that
     $F_{a+1}\notin \Gamma \mod n$, because otherwise
     $\Gamma=\D_t(C_n)$. Therefore from Note~\ref{not:not1} we have
      \begin{eqnarray}
        \{x_a,x_{a+1},\dots,x_{a+t-1}\}\subset U&\mbox{and}&
         x_{a+t}\notin U. \label{eqn:U}
      \end{eqnarray}
Let $r$ be the largest non-negative integer such that
     $x_{a-i}\in U \mod n$ for $0\leq i \leq r$ so that
    \begin{eqnarray}
   \stackrel{\underbrace{x_{a-r-1},}}{\notin U}\
      \stackrel{\underbrace{x_{a-r},\ldots,x_{a-1},x_a,x_{a+1},\ldots,x_{a+t-1},}}{\in U}\
       \stackrel{\underbrace{\ x_{a+t}}.}{\notin U} \label{eqn:UU}
\end{eqnarray}

   It follows that since $\Gamma$ is an induced subcollection of
    $\D_t(C_n)$ on $U$, $F_{a-r},F_{a-r+1},\dots,F_a\in \Gamma$.
    We now show that $$F_i\notin \Gamma \mbox{ for all } i\notin
    \{a-r,a-r+1,\dots,a\} \mod n.$$ This follows from the fact that
    $\Gamma$ is connected: if any $F_i$ (except for $a-r\leq i \leq a
    \mod n$) intersects some of the facets $F_{a-r},\ldots,F_a \mod
    n$, then it must contain $x_{a-r-1}$ or $x_{a+t}$ (as otherwise it
    would be one of $F_{a-r},\ldots,F_a \mod n$), and hence $F_i
   \notin \Gamma$.

   We have therefore shown that $$ \Gamma=\langle
   F_{a-r},F_{a-r+1},\dots,F_a\rangle \mod n.$$
   Next we prove $\Gamma=\D_t(L_{|U|})$. Without loss of
   generality we can assume that $a-r=1$, so that $\Gamma=\langle
   F_1,\dots,F_{r+1}\rangle$. Since $\Gamma\neq \D_t(C_n)$ we can say
   that $r+1<n$ and therefore we have $$\ver
   (\Gamma)=\{x_1,x_2,\dots,x_{r+t}\}.$$ Since $\Gamma$ is induced and
   proper we have $r+t<n$, and therefore we can conclude
   that $$\Gamma=\D_t(L_{\{x_1,x_2,\dots,x_{t+r}\}}).$$
  \end{proof}

\begin{example} Consider the cycle $C_7$ on vertex set
$\XX=\{x_1,\dots x_7\}$ and the simplicial complex $\D_4(C_7)$. The
  following induced subcollections are two runs in
  $\D_4(C_7)$ $$\begin{array}{lll} \D_1&=&\langle
    \{x_1,x_2,x_3,x_4\},\{x_2,x_3,x_4,x_5\}\rangle\\ \D_2&=&\langle
    \{x_1,x_2,x_6,x_7\},\{x_1,x_2,x_3,x_7\},\{x_1,x_2,x_3,x_4\}\rangle.
\end{array}$$
\end{example}

\idiot{
\begin{lem}\label{col:recentcol} Let $\Gamma$ and $\Lambda$ be
two induced subcollections of $\D_t(C_n)=\langle
F_1,F_2,\dots,F_n\rangle$ which are both composed of runs of lengths
$s_1,\dots,s_r$. Then $\Gamma$ and $\Lambda$ are homeomorphic as
simplicial complexes. In particular the two simplicial complexes
$\Gamma^c$ and $\Lambda^c$ are homeomorphic and have the same reduced
homologies.
\end{lem}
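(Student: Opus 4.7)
The plan is to build an explicit simplicial isomorphism $\varphi \colon \Gamma \to \Lambda$, and then deduce both the homeomorphism claim and the homology claim from it. By Proposition~\ref{prop:prop3.3}, the connected components of an induced subcollection of $\D_t(C_n)$ are runs, so $\Gamma$ and $\Lambda$ each decompose as disjoint unions of runs, with the same multiset of lengths $s_1,\dots,s_r$. By Definition~\ref{defn:defn3.5}, a run of length $s_i$ is isomorphic to $\D_t(L_{s_i+t-1})$, a simplicial complex whose combinatorial structure is determined up to relabeling of vertices by the pair $(t,s_i)$.

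First I would pair up the runs of $\Gamma$ with those of $\Lambda$ according to length. For each matched pair, I would define a bijection of vertex sets that respects the natural linear order on the run (sending the $j$-th vertex of the run in $\Gamma$ to the $j$-th vertex of the corresponding run in $\Lambda$). Because the facets of $\D_t(L_{s_i+t-1})$ are the consecutive $t$-element windows under this linear order, each such bijection sends facets to facets and therefore induces an isomorphism between the two runs. Taking the disjoint union of these bijections yields a bijection $\varphi\colon \ver(\Gamma)\to \ver(\Lambda)$ with $|\ver(\Gamma)|=|\ver(\Lambda)|$ that extends to a simplicial isomorphism $\Gamma\cong\Lambda$.

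Second, I would transport $\varphi$ to the complements. If $F$ is a facet of $\Gamma$, then $\varphi(F)$ is a facet of $\Lambda$, and
\[
\varphi\bigl(\ver(\Gamma)\setminus F\bigr)=\ver(\Lambda)\setminus \varphi(F),
\]
so $\varphi$ sends the facets of $\Gamma^c_{\ver(\Gamma)}$ bijectively onto the facets of $\Lambda^c_{\ver(\Lambda)}$, giving a simplicial isomorphism $\Gamma^c_{\ver(\Gamma)} \cong \Lambda^c_{\ver(\Lambda)}$. A simplicial isomorphism induces a homeomorphism of geometric realizations, which in turn induces isomorphisms on all reduced homology groups, completing the argument.

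The only minor obstacle is notational: in $\D_t(C_n)$ some runs may ``wrap around'' the cycle so that their vertex labels do not appear in linear order within $\{x_1,\dots,x_n\}$. However, since we are taking induced subcollections, only the adjacencies internal to the run matter, and choosing the above linear order along the run bypasses this issue entirely.
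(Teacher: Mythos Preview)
Your proposal is correct and follows essentially the same approach as the paper: pair up the runs of equal length in $\Gamma$ and $\Lambda$, define the vertex bijection $\varphi$ that sends the $u$-th vertex of each run in $\Gamma$ to the $u$-th vertex of the corresponding run in $\Lambda$, and observe that this is a simplicial isomorphism which then carries over to the complements. Your write-up is slightly more explicit than the paper's in verifying that $\varphi$ carries facets of $\Gamma^c_{\ver(\Gamma)}$ to facets of $\Lambda^c_{\ver(\Lambda)}$, but the underlying argument is the same.
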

      \begin{proof} First we suppose $\D_t(C_n)=\langle F_1,F_2,
   \dots,F_n\rangle$ has standard labeling. If we denote each run of
   length $s_i$ in $\Gamma$ and $\Lambda$ by $R_i$ and
   $R^{\prime}_{i}$, respectively, we have
  $$\Gamma=\langle R_1,R_2,\dots,R_r\rangle \mbox{ and }
  \Lambda=\langle {R_1}^{\prime},{R_2}^{\prime},\dots,{R_r}^{\prime}\rangle$$
   where, using the standard labeling, for $ 1\leq j_i,h_i \leq n$  we have
    $$R_i=\langle F_{j_i},F_{j_i+1},\dots,F_{j_i+s_i-1}\rangle \mbox{ and }
     {R_i}^{\prime}=\langle F_{h_i},F_{h_i+1},\dots,F_{h_i+s_i-1}\rangle
     \mod n.$$
   Then clearly $$\ver(\Gamma)=\displaystyle\bigcup_{i=1}^{r}\ver
   (R_i)=\bigcup_{i=1}^{r}\{x_{j_i},x_{j_i+1},\dots,x_{j_i+s_i+t-2}\}$$
   and $$\ver (\Lambda)=\displaystyle\bigcup_{i=1}^{r}\ver
   ({R_i}^{\prime})=\bigcup_{i=1}^{r}\{x_{h_i},x_{h_i+1},\dots,x_{h_i+s_i+t-2}\}.$$
   Now we define the function $\varphi:\ver (\Gamma)\longrightarrow
   \ver (\Lambda)$ where $$\varphi(x_{j_i+u})=x_{h_i+u} \mbox{ for }
   0\leq u \leq s_i+t-2.$$
   Since $\varphi$ is a bijective map between vertex set $\Gamma$ and
   $\Lambda$ which preserves faces, we can conclude $\Gamma$ and
   $\Lambda$ are homeomorphic. Therefore, two simplicial complexes
   $\Gamma^c$ and $\Lambda^c$ are homeomorphic as well and have the
   same reduced homology.
   \end{proof}
}

If $\Gamma$ and $\Lambda$ are two induced subcollections of
$\D_t(C_n)$ composed of runs of of equal lengths, using a bijective
map between their vertex sets one can easily see that $\Gamma$ and
$\Lambda$ are homeomorphic as simplicial complexes. In particular the
two simplicial complexes $\Gamma^c$ and $\Lambda^c$ are homeomorphic
and have the same reduced homologies. Therefore, in light of
Proposition~\ref{prop:prop3.3} and Theorem~\ref{col:col15} all the
information we need to compute the Betti numbers of $\D_t(C_n)$, or
equivalently the homologies of induced subcollections of $\D_t(C_n)$,
depend on the number and the lengths of the runs.

\begin{defn}\label{defn:e(s_1)} For a fixed integer $t \geq 2$,
let the pure $(t-1)$-dimensional simplicial complex $\Gamma=\langle
F_1,\ldots,F_s\rangle$ be a disjoint union of runs of length
$s_1,\ldots,s_r$. Then the sequence of positive integers
$s_1,\ldots,s_r$ is called a \textbf{run sequence} on
$\YY=\ver(\Gamma)$, and we use the notation
$$E(s_1,\dots,s_r)=\Gamma^c_\YY=\langle(F_1)_{{\YY}}^{c},\dots,
(F_s)_{\YY}^{c}\rangle.$$
\end{defn}

\section{Reduced homologies for Betti numbers}

 Let $I=I_t(C_n)$ be the path ideal of the the cycle $C_n$ for some $t
 \geq 2$.  By applying Hochster's formula (Theorem~\ref{col:col15}),
 we see that to compute the Betti numbers of $R/I$, we need to compute
 the reduced homologies of complements of induced subcollections of
 $\D$ which by Proposition~\ref{prop:prop3.3} are disjoint unions of
 runs. This section is devoted to complex homological
 calculations. The results here will allow us to compute all Betti
 numbers of $R/I$ (and more) in the sections that follow.  We begin by
 recalling the {Mayer-Vietoris} sequence; for more details see for
 example Hatcher~\cite {Hatcher2002} Chapter 2.

Suppose $\D$ is a simplicial complex and $\D_1$ and $\D_2$ are two
subcollections such that $\D=\D_1\cup\D_2$. We have the exact
sequence of the chain complexes $$0\rightarrow
{\CC}.(\D_1\cap\D_2)\rightarrow
{\CC}.(\D_1)\oplus{\CC}.(\D_2)\rightarrow {\CC}.(\D)\rightarrow 0$$
which produces the following long exact sequence,
called the \textbf{ Mayer-Vietoris} sequence $$\dots \rightarrow
\widetilde{H}_i(\D_1\cap\D_2)\rightarrow
\widetilde{H}_i(\D_1)\oplus\widetilde{H}_i(\D_2)\rightarrow
\widetilde{H}_i(\D)\rightarrow
\widetilde{H}_{i-1}(\D_1\cap\D_2)\rightarrow \dots .$$
We make a basic observation.
\begin{lem}\label{lem:lem2} Let $E_1,\dots,E_m$ be subsets of the finite
set $V$ where $s\geq 2$ and suppose that ${\EE}=\langle (E_1)_V^c,
(E_2)_V^c, ..., (E_m)_V^c \rangle$. Then for any $i$ we have
\renewcommand{\theenumi}{\roman{enumi}}
\begin{enumerate}
\item Suppose ${\displaystyle V\setminus\bigcup_{j=2}^s
  E_j}\neq \emptyset$. If ${\EE_1}=\langle (E_1)_V^{c}\rangle$ and
  $\EE_2=\langle (E_2)_V^{c},...,(E_m)_V^{c}\rangle$ then
\begin{align*}
\widetilde{H}_i({\EE})=\widetilde{H}_i({\EE_1}\cup {\EE_2}) & \cong\widetilde{H}_{i-1}({\EE_1}\cap {\EE_2})~\vspace{.1 in}\\
&=\widetilde{H}_{i-1}(\langle (E_1\cup E_2)_V^{c},...,(E_1 \cup E_m)_V^{c}\rangle)\\
&=\widetilde{H}_{i-1}(\langle (E_2)_{(V\setminus E_1)}^{c},...,(E_m)_{(V \setminus E_1)}^{c}\rangle).
\end{align*}
\item If $E_a \subset E_b$ for some $a\neq b$, then $\EE=\langle
  (E_1)_v^c,\ldots,(\widehat{E_b})_V^c,\ldots,(E_m)_V^c \rangle$.
\end{enumerate}
\end{lem}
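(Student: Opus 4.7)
The plan for part (i) is to apply the Mayer--Vietoris sequence to the decomposition $\EE = \EE_1 \cup \EE_2$. First I would observe that $\EE_1 = \langle (E_1)_V^c \rangle$ is a single simplex, hence contractible, so $\widetilde{H}_i(\EE_1) = 0$ for every $i$. Next, I would use the hypothesis $V \setminus \bigcup_{j \geq 2} E_j \neq \emptyset$ to fix a vertex $v$ lying outside every $E_j$ with $j \geq 2$. Then $v \in (E_j)_V^c$ for each such $j$, so $v$ belongs to every facet of $\EE_2$. Thus $\EE_2$ is a cone with apex $v$, and Proposition~\ref{prop:prop2.16} gives $\widetilde{H}_i(\EE_2) = 0$ for all $i$.

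With both summands in the Mayer--Vietoris sequence vanishing, the connecting homomorphism yields the desired isomorphism $\widetilde{H}_i(\EE) \cong \widetilde{H}_{i-1}(\EE_1 \cap \EE_2)$. It then remains to identify the intersection. A face of $\EE_1 \cap \EE_2$ must be contained in $(E_1)_V^c$ and also in some $(E_j)_V^c$ for $j \geq 2$, hence in $(E_1)_V^c \cap (E_j)_V^c = (E_1 \cup E_j)_V^c$; conversely every subset of such a set lies in both complexes. Therefore
$$\EE_1 \cap \EE_2 = \langle (E_1 \cup E_2)_V^c, \ldots, (E_1 \cup E_m)_V^c \rangle.$$
The final equality in the statement is then a notational rewriting, since $V \setminus (E_1 \cup E_j) = (V \setminus E_1) \setminus E_j$, so $(E_1 \cup E_j)_V^c = (E_j)_{V \setminus E_1}^c$ once we regard the ambient vertex set as $V \setminus E_1$.

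For part (ii), the containment $E_a \subset E_b$ immediately gives $(E_b)_V^c \subset (E_a)_V^c$, so $(E_b)_V^c$ is a non-facet face of $\EE$ and deleting it from the generating list does not alter the simplicial complex.

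The key step --- and the only place where the hypothesis is really used --- is showing $\widetilde{H}_i(\EE_2) = 0$ by exhibiting a common vertex in all facets of $\EE_2$; everything else is either a Mayer--Vietoris manipulation or a routine complement identity. Without the emptiness hypothesis one cannot guarantee that $\EE_2$ is a cone, and both the coning argument and the resulting isomorphism would break down.
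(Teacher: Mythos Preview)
Your proposal is correct and follows essentially the same argument as the paper: both use that $\EE_1$ is a simplex and that the hypothesis makes $\EE_2$ a cone (via Proposition~\ref{prop:prop2.16}), then apply Mayer--Vietoris to obtain the isomorphism and identify the intersection as $\langle (E_1\cup E_2)_V^c,\ldots,(E_1\cup E_m)_V^c\rangle$. If anything, you supply slightly more detail than the paper does (explicitly naming the cone apex, justifying the intersection computation, and spelling out the final complement identity), while the paper simply declares~(ii) trivial and writes down the Mayer--Vietoris sequence for~(i).
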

The decomposition ${\EE}={\EE_1}\cup{\EE_2}$ described above is called
{\bf standard decomposition} of ${\EE}$.
      \begin{proof} The proof of (ii) is trivial so we shall only
        prove (i).
        Since ${\EE_1}$ is a simplex, we have
        $\widetilde{H}_i({\EE_1})=0$. Also since ${\displaystyle
          V\setminus\bigcup_{i=2}^s E_i}\neq \emptyset$ we have
        ${\EE_2}$ is a cone, so from Proposition~\ref{prop:prop2.16}
        we conclude $$\widetilde{H}_{i}({\EE}_2)=0\mbox{ for all }i.$$
        Now ${\EE_1}\cap{\EE_2}=\langle {(E_1\cup E_2)}_V^{c},...,{(E_1
          \cup E_m)}_V^{c}\rangle$, so by applying the Mayer-Vietoris
        sequence we reach the following exact
        sequence
       $$\cdots \longrightarrow \widetilde{H}_i({\EE_1}) \oplus
        \widetilde{H}_i({\EE_2})\longrightarrow
        \widetilde{H}_i({\EE})\longrightarrow
        \widetilde{H}_{i-1}({\EE_1}\cap {\EE_2})\longrightarrow
        \widetilde{H}_{i-1}({\EE_1})\oplus\widetilde{H}_{i-1}({\EE_2})
        \longrightarrow \cdots $$ which implies
        that
       \begin{align*}
          \widetilde{H}_i({\EE_1}\cup
          {\EE_2})\cong&\widetilde{H}_{i-1}({\EE_1}\cap
          {\EE_2})~\vspace{.1 in}\\ =&\widetilde{H}_{i-1}(\langle
          {(E_1\cup E_2)}_V^{c},...,{(E_1 \cup E_m)}_V^{c}\rangle).
       \end{align*}
\end{proof}
\begin{prop}\label{prop:prop9} Let $\Gamma=\langle
  E_1,\dots,E_m\rangle$ be a pure simplicial complex of dimension
  $t-1$ over the vertex set $V=\{x_1,\dots,x_n\}$ where $2 \leq t \leq
  n$.  Suppose the connected components of $\Gamma$ are runs of
  lengths $s_1,\dots,s_r$, and ${\EE}=E(s_1,\dots,s_r)$. Let
 $s_j=(t+1)p_j+d_j$ where $p_j \geq 0$ and $0\leq d_j\leq t$ and
  $1\leq j \leq r$. Then for all $i$, we have
\bigskip

\begin{tabular}{rll}
$i.$ & If $s_j\geq t+2$ then&
$\widetilde{H}_i({\EE}))\cong \widetilde{H}_{i-2}(E(s_1,\dots, s_j-(t+1),\dots,s_r));$\\
&&\\
$ii.$ & If $d_j \neq 1,2$ then&
$\widetilde{H}_{i}(\EE)=0$;\\
&&\\
$iii.$&If $s_j=2$ and $r\geq 2$ then&
$\widetilde{H}_{i}({\EE})=\widetilde{H}_{i-2}(E(s_1,\dots,s_{j-1},s_{j+1},\dots,s_r))$;\\
&&\\
$iv.$& If $s_j=1$ and $r\geq 2$ then &
$\widetilde{H}_{i}({\EE})=\widetilde{H}_{i-1}(E(s_1,\dots,s_{j-1},s_{j+1},\dots,s_r))$.
\end{tabular}
\end{prop}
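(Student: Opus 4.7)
The plan is to prove all four parts by repeated application of the standard decomposition Lemma~\ref{lem:lem2}(i), in each case taking the distinguished set $E_1$ to be the first facet $F_1^{(j)}$ of the $j$-th run. Since the runs are pairwise disjoint, $F_1^{(j)}$ contains vertices lying in no other $E_k$, so the hypothesis $V\setminus\bigcup_{k\neq 1}E_k\neq\emptyset$ is automatic and a Mayer--Vietoris shift of $-1$ is produced. In between successive applications I will use Lemma~\ref{lem:lem2}(ii) to delete any redundant generators whose underlying sets sit in a chain of inclusions.

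For (iv), the lone facet $F_1^{(j)}$ is disjoint from every other $E_k$, so a single application of Lemma~\ref{lem:lem2}(i) reduces $\EE$ directly to $E(s_1,\dots,s_{j-1},s_{j+1},\dots,s_r)$ on the vertex set $V\setminus F_1^{(j)}$, giving the advertised shift of $-1$. For (iii), after one application the only surviving $j$-th-run underlying set in $V\setminus F_1^{(j)}$ is the singleton $F_2^{(j)}\setminus F_1^{(j)}$ (using that $|F_2^{(j)}\setminus F_1^{(j)}|=1$), and a second application of Lemma~\ref{lem:lem2}(i) with this singleton as the new $E_1$ kills the $j$-th run entirely and produces the total shift of $-2$.

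The central case is (i), where $s_j\geq t+2$. Labeling the vertices of the $j$-th run as $y_1,\dots,y_{s_j+t-1}$ so that $F_\ell^{(j)}=\{y_\ell,\dots,y_{\ell+t-1}\}$, the facets $F_2^{(j)},\dots,F_{t+1}^{(j)}$ restrict to $V\setminus F_1^{(j)}$ in the strict chain $\{y_{t+1}\}\subsetneq\{y_{t+1},y_{t+2}\}\subsetneq\dots\subsetneq\{y_{t+1},\dots,y_{2t}\}$, so Lemma~\ref{lem:lem2}(ii) collapses them down to the singleton $\{y_{t+1}\}$. The facets $F_{t+2}^{(j)},\dots,F_{s_j}^{(j)}$ remain untouched because they are disjoint from $F_1^{(j)}$. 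A second application of Lemma~\ref{lem:lem2}(i), peeling off $\{y_{t+1}\}$ (legal because $y_{t+1}$ lies in no other remaining underlying set), removes one further vertex and leaves the $j$-th run shortened by exactly $t+1$ facets. The outcome is $E(s_1,\dots,s_j-(t+1),\dots,s_r)$ with total shift $-2$.

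Finally, (ii) follows from (i) together with a cone argument. Because the residue $d_j$ is preserved under $s_j\mapsto s_j-(t+1)$, iterating (i) reduces the claim to one of the base cases $3\leq s_j\leq t$ (when $d_j=s_j$) or $s_j=t+1$ (when $d_j=0$, $p_j=1$). In either base case I rerun the two-step decomposition of part (i), except that now $s_j\leq t+1$ so no facets $F_{t+2}^{(j)},\dots$ remain after the collapse. If $r=1$, the intermediate complex after step~1 is already a single simplex and has vanishing reduced homology, forcing $\widetilde{H}_i(\EE)=0$. If $r\geq 2$, step~2 removes $y_{t+1}$ and the leftover $j$-th-run vertices $\{y_{t+2},\dots,y_{s_j+t-1}\}$ (nonempty precisely because $s_j\geq 3$) lie in no other-run facet, hence are contained in every remaining generator of the final complex; that complex is therefore a cone and has zero reduced homology by Proposition~\ref{prop:prop2.16}. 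The delicate part throughout is the bookkeeping of which underlying sets survive each application of Lemma~\ref{lem:lem2}(ii), and in particular verifying that the hypothesis $d_j\neq 1,2$ is exactly what guarantees that the leftover vertex set $\{y_{t+2},\dots,y_{s_j+t-1}\}$ is nonempty in every base case, so that the cone argument applies.
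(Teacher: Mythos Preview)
Your proof is correct and follows essentially the same approach as the paper: you peel off the first facet $F_1^{(j)}$ of the $j$-th run via Lemma~\ref{lem:lem2}(i), collapse the resulting chain of inclusions among $F_2^{(j)},\dots,F_{t+1}^{(j)}$ to the singleton $\{y_{t+1}\}$ via Lemma~\ref{lem:lem2}(ii), and then peel that singleton off to complete the two-step reduction. The only minor deviation is in the base case of (ii) with $r\geq 2$: the paper observes directly after the first peel that $y_{s_j+t-1}$ lies in every remaining complement (so the intermediate complex is already a cone), whereas you perform the second peel first and then invoke the cone argument on the leftover vertices $\{y_{t+2},\dots,y_{s_j+t-1}\}$; both routes are valid and the difference is cosmetic.
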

     \begin{proof} We assume without loss of generality that $E_1,\dots,E_m$
    are ordered such that $E_1,\dots,E_{s_j}$ are the facets of the
       run of length $s_j$, and they have standard labeling
     $$E_1=\{x_1,x_2,\dots,x_t\},E_2=\{x_2,x_3,\dots,x_{t+1}\},\dots,
         E_{s_j}=\{x_{s_j},x_{s_j+1},\dots,x_{s_j+t-1}\}\mod n.$$
    We have $\EE=\langle (E_1)_{V}^{c},(E_2)_{V}^{c},...,(E_m)_{V}^{c}
    \rangle$. Since $x_1\in V\setminus \bigcup_{i=2}^m E_i$ there is a
    standard decomposition $$\EE=\langle
   (E_1)_{V}^{c}\rangle\cup\langle
    (E_2)_{V}^{c},\ldots, (E_m)_{V}^{c}\rangle.$$ From
    Lemma~\ref{lem:lem2} (i), setting $V^{\prime}=V \setminus
    \{x_1,x_2,\dots,x_t\}$, we have
  \begin{eqnarray} \widetilde{H}_i(\EE) \cong \widetilde{H}_{i-1}
 (\langle ( E_2)_{V'}^{c},\ldots,(E_m)_{V'}^{c}\rangle).
   \label{eqn:newedition}
    \end{eqnarray}
 If $s_j\geq t+2$ from (\ref{eqn:newedition}) we have
  \begin{eqnarray} \widetilde{H}_i(\EE))=\widetilde{H}_{i-1}(\langle
     {\{x_{t+1}\}}_{V^{\prime}}^{c},(E_{t+2})_{V^{\prime}}^{c},\dots,
     (E_m)_{V^{\prime}}^{c}\rangle) \label{eqn:edition}
    \end{eqnarray}
  and since the following is a standard decomposition
   $$\langle
 {\{x_{t+1}\}}_{V^{\prime}}^{c}\rangle\cup\langle(E_{t+2})_{V^{\prime}}^{c},
\dots,(E_{s_j})_{V^{\prime}}^{c},\dots,(E_m)_{V^{\prime}}^{c}\rangle$$
from (\ref{eqn:edition}), Lemma~\ref{lem:lem2} (i) and by setting
  $V^{\prime\prime}=V \setminus \{x_1,x_2,\dots,x_{t+1}\}$, we have
  $$\widetilde{H}_i(\EE)\cong \widetilde{H}_{i-2}(\langle
(E_{t+2})_{V^{\prime\prime}}^c,\dots,
  (E_{s_j})_{V^{\prime\prime}}^c,
  \dots,(E_m)_{V^{\prime\prime}}^c\rangle).$$ Now the connected
  components of $\langle E_{t+2},\ldots,E_m\rangle$ are runs of
  lengths $s_1,\ldots,s_j-(t+1),\dots,s_r$, and therefore we can
  conclude that for all $i$
   $$\widetilde{H}_i(\EE)=\widetilde{H}_{i-2}(E(s_1,\dots,s_j-(t+1),
   \dots, s_r)).$$
This settles Case (i) of the proposition. Now suppose $1 \leq s_j <
t+2$. In this case by (\ref{eqn:newedition}) and Lemma~\ref{lem:lem2}
(i) and (ii) we see that
\begin{eqnarray} \widetilde{H}_i(\EE)\cong\widetilde{H}_{i-1}(\langle {\{x_{t+1}\}}_{V^{\prime}}^{c},(E_{s_j+1})_{V^{\prime}}^{c},\dots, (E_m)_{V^{\prime}}^{c}\rangle) &\mbox{for all $i$}. \label{eqn:edition2}
\end{eqnarray}
\begin{enumerate}
\item If $ s_j\geq 3$ since $x_{s_j+t-1}\in V^{\prime}\setminus
  (\bigcup_{i=s_j+1}^m E_i\cup \{x_{t+1}\})$ the simplicial
  complex $$\langle {\{x_{t+1}\}}_{V^{\prime}}^{c},
  (E_{s_j+1})_{V^{\prime}}^{c},\dots,(E_m)_{V^{\prime}}^{c}\rangle$$
  is a cone and by Proposition~\ref{prop:prop2.16} and
  (\ref{eqn:edition2}) we have $\widetilde{H}_i(\EE) =0$ for all $i$.
\item If $s_j=2$ and $r\geq 2$, since $x_{t+1}\in V^{\prime}\setminus
  (\bigcup_{i=s_j+1}^m E_i)$ we have $$\langle
  {\{x_{t+1}\}}_{V^{\prime}}^{c}\rangle\cup\langle(E_{s_j+1})_{V^{\prime}}^{c},
  \dots,(E_m)_{V^{\prime}}^{c}\rangle$$ is a standard decomposition
  and then by Lemma~\ref{lem:lem2} and (\ref{eqn:edition2}) we
  have $$\begin{array}{llll}
    \widetilde{H}_i(\EE)&\cong&\widetilde{H}_{i-2}(\langle
    (E_{s_j+1})_{V^{\prime\prime}}^{c},\dots,(
    E_m)_{V^{\prime\prime}}^{c}\rangle)~\vspace{.1 in}
    \\ &=&\widetilde{H}_{i-2}(E(s_1,\dots,s_{j-1},s_{j+1},\dots,s_r))&\mbox{for
      all $i$.}
\end{array}$$
This settles Case~(iii).
\item If $s_j=1$ and $r\geq 2$ since $E_1\cap E_{h}=\emptyset$ for $ 1
  < h \leq s$, and from (\ref{eqn:newedition}) we
 have $$\widetilde{H}_i(\EE)\cong \widetilde{H}_{i-1}(E(s_1,\dots
    s_{j-1},s_{j+1},\dots,s_r)) \mbox{ for all } i.$$
This settles Case (iv).
\end{enumerate}
To prove (ii), we use induction on $p_j$. If $p_j=0$, then
$d_j=s_j\geq 1$. From above we know that $\widetilde{H}_i(\EE)=0$ if
$3\leq s_j\leq t$, and we are done.  Now suppose $p_j \geq 1$ and the
statement holds for all values less than $p_j$. We have two cases:
\begin{enumerate}
\item If $s_j<t+2$, then since $p_j\geq 1$, we must have $p_j=1$,
  $d_j=0$, and $s_j=t+1$. It was proved above (under the case $3 \leq
  s_j < t+2$) that $\widetilde{H}_i(\EE)=0$.
\item If $s_j\geq t+2$, by (i) we have
$$\widetilde{H}_i(\EE)\cong\widetilde{H}_{i-2}(E(s_1,\dots,(t+1)(p_j-1)+d_j,
  \dots,s_r))=0 \mbox{  when }d_j\neq 1,2.$$
\end{enumerate}
This proves (ii) and we are done.
   \end{proof}
We conclude that for computing the homology of the induced
subcollections of path complexes of  cycles or lines the only cases
which have to be considered are those of runs of length one or two. We
now set about computing these.
\begin{prop}\label{prop:prop13} Let $t$ and $n$ be integers, such that
$2\leq t \leq n$. Let $\alpha,\beta \geq 0$ and
  consider $${\EE}=E((t+1)p_1+1,\dots,(t+1)p_{\alpha}+1,(t+1)q_1+2,\dots,(t+1)q_{\beta}+2)$$
  for nonnegative integers
  $p_1\dots,p_{\alpha},q_1,\dots,q_{\beta}$. Then
$$\widetilde{H}_i({\EE})=\left\{
\begin{array}{ll}
  K & i=2(P+Q)+2\beta+\alpha-2 \\
  0 & \mbox{otherwise}
\end{array}\right.$$
where $P=\sum_{i=1}^\alpha p_i$ and $Q=\sum_{i=1}^\beta q_i$.
\end{prop}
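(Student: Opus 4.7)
The plan is to prove Proposition~\ref{prop:prop13} by induction on $N=\alpha+\beta+P+Q$, using Proposition~\ref{prop:prop9} to peel off one unit of structure from the run sequence at each step and tracking how the homological degree shifts accordingly.

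For the base case, $N=1$ forces either $(\alpha,\beta,P,Q)=(1,0,0,0)$ or $(0,1,0,0)$. In the first case $E(1)=\langle\emptyset\rangle$ because the single facet of a run of length one equals the whole vertex set, so its only nonvanishing reduced homology is $\widetilde{H}_{-1}=K$, matching the predicted degree $2(0)+2(0)+0+1-2=-1$. In the second case $E(2)$ consists of two disjoint points, so $\widetilde{H}_0=K$ and all other reduced homologies vanish, again matching $2(0)+2(1)+0+0-2=0$.

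For the inductive step, I would branch on the structure of the run sequence. If some $p_j\geq 1$ (respectively $q_j\geq 1$), I would invoke Proposition~\ref{prop:prop9}(i) on the corresponding run, which drops that $p_j$ (resp.\ $q_j$) by one, strictly decreases $N$, and shifts the homology degree by two; the induction hypothesis then delivers the right group at degree $i-2=2(P-1)+2Q+\alpha+2\beta-2$ (or the analogous quantity with $Q-1$), which rearranges to the claimed $i=2(P+Q)+2\beta+\alpha-2$. Otherwise every run has length one or two, so $r=\alpha+\beta$; if $\alpha\geq 1$ I would invoke Proposition~\ref{prop:prop9}(iv) to delete a run of length one, decreasing $\alpha$ by one and shifting homology by one, while if $\alpha=0$ but $\beta\geq 2$ I would invoke Proposition~\ref{prop:prop9}(iii) to delete a run of length two, decreasing $\beta$ by one and shifting homology by two. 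In each subcase a short arithmetic check confirms the match between the predicted degree and the induction hypothesis, and the vanishing of $\widetilde{H}_i$ at every other $i$ is inherited through the isomorphisms of Proposition~\ref{prop:prop9}.

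The only real obstacle is the arithmetic bookkeeping: one must verify in each of the four branches that the homology shift predicted by Proposition~\ref{prop:prop9} exactly matches the change in $2(P+Q)+2\beta+\alpha-2$ under the associated parameter change, and one must ensure that cases (iii) and (iv) of Proposition~\ref{prop:prop9} are only invoked when $r\geq 2$. The latter is automatic because if $r=1$ and all $p_i,q_j$ vanish, then we are already in the base case $E(1)$ or $E(2)$, and if $r=1$ with some $p_1\geq 1$ or $q_1\geq 1$ we instead apply case (i), which carries no such restriction.
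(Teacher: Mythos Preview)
Your proof is correct and follows essentially the same approach as the paper: both reduce via Proposition~\ref{prop:prop9}(i) until every run has length $1$ or $2$, then strip off runs using parts~(iii) and~(iv), anchoring in the explicit computations of $E(1)$ and $E(2)$. The paper presents this as a single chain of isomorphisms rather than a formal induction on $N=\alpha+\beta+P+Q$, but the content is identical; if anything, your inductive framing is slightly more careful about the $r\geq 2$ hypothesis and the $\beta=0$ endpoint.
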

From here on, we use the notation $E(1^\alpha,2^\beta)$ to denote the
complex $\EE$ described in the statement of
Proposition~\ref{prop:prop13} in the case where all the $p$'s and
$q$'s are zero; i.e. the case of $\alpha$ runs of length one and
$\beta$ runs of length two.
\begin{proof}
First we prove the two cases $\alpha=0$, $\beta=1$ and $\alpha=1$, $\beta=0$.
\begin{enumerate}
\item If $\alpha=1$, $\beta=0$,  then ${\EE}\cong\langle V\setminus\{x_1,x_2,\dots,x_t\}\rangle=\{\emptyset\}$
where $V=\{x_1,\dots,x_t\}$, and therefore
$$\widetilde{H}_i ({\EE})=\left\{
\begin{array}{ll}
K & i=-1 \\
0 &  \mbox{otherwise.}
\end{array}\right.$$
\item If $\alpha=0$, $\beta=1$, then ${\EE}\cong\langle
  (\{x_1,x_2,\dots,x_{t}\})^{c}_{V},(\{x_2,\dots,x_{t+1}\})^{c}_{V}\rangle=\langle
  \{x_{t+1}\},\{x_1\}\rangle $ where $V=\{x_1,\dots,x_{t+1}\}$.
  Since ${\EE}$ is disconnected, we have
$$\widetilde{H}_i ({\EE})=\left\{
\begin{array}{ll}
  K &i=0 \\
 0 & \mbox{otherwise.}
\end{array}\right. $$
\end{enumerate}
To prove the statement of the proposition, we use repeated
applications of Proposition~\ref{prop:prop9}~(i), $p_1$ times to the
first run, $p_2$ times to the second run, and so on till $q_\beta$
times to the last run as follows.
 \begin{align*}
    \widetilde{H}_i({\EE})=&\ \widetilde{H}_i
    (E((t+1)p_1+1,\dots,(t+1)p_{\alpha}+1,(t+1)q_1+2,\dots,(t+1)q_{\beta}+2))\\
      \cong &\ \widetilde{H}_{i-2}
   (E((t+1)(p_1-1)+1,\dots,(t+1)p_{\alpha}+1,(t+1)q_1+2,\dots,(t+1)q_{\beta}+2))\\
      \vdots&\\
\cong&\ \widetilde{H}_{i-2(P+Q)}(E(1^{\alpha},2^{\beta})) \hspace{1in} \mbox{apply Proposition~\ref{prop:prop9} (iv)}\\
\cong &\ \widetilde{H}_{i-2(P+Q)-\alpha}(E(2^\beta))  \hspace{1.05in}\mbox{apply Proposition~\ref{prop:prop9} (iii)}\\
\cong &\ \widetilde{H}_{i-2(P+Q)-\alpha-2\beta+2}(E(2)) \hspace{.75in} \mbox{apply Case 2 above}\\
 =&\left\{\begin{array}{ll}
              K &   i=2(P+Q)+\alpha+2\beta-2 \\
              0 &\mbox{otherwise.}
            \end{array}\right.
\end{align*}
\end{proof}
An immediate consequence of the above calculations is the homology of the complement of a run, or equivalently, the path complex of any line graph.
\begin{col}\label{col:one-run} Let $t$, $p$ and $d$ be integers
such that $t \geq 2$, $p\geq 0$,and $0\leq d\leq t$.  Then
$$\widetilde{H}_i({E((t+1)p+d)})=\left\{
\begin{array}{ll}
  K & d=1,\ i=2p-1 \\
  K & d=2,\  i=2p \\
  0 & \mbox{otherwise.}
\end{array}\right.$$
\end{col}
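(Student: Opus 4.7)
The plan is to derive this corollary as a direct specialization of the two results immediately preceding it. Since we are dealing with a single run of length $s_1 = (t+1)p + d$, we are in the setting of Proposition~\ref{prop:prop9} with $r = 1$ and of Proposition~\ref{prop:prop13} with exactly one of $\alpha,\beta$ equal to $1$ (when applicable).

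First I would dispose of the vanishing cases. If $d \neq 1, 2$ (so $d \in \{0, 3, 4, \ldots, t\}$), then Proposition~\ref{prop:prop9}~(ii) applied with $j = 1$ directly gives $\widetilde{H}_i(E((t+1)p+d)) = 0$ for all $i$, matching the ``otherwise'' row. This handles all $d$ outside $\{1,2\}$ without any further computation.

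Next, for the two nontrivial cases, I would read off the homology from Proposition~\ref{prop:prop13}. In the case $d = 1$, set $\alpha = 1$, $\beta = 0$, $p_1 = p$ in Proposition~\ref{prop:prop13}; then $P = p$ and $Q = 0$, so the unique nonzero reduced homology lives in degree
\[
2(P+Q) + 2\beta + \alpha - 2 = 2p + 0 + 1 - 2 = 2p - 1,
\]
and equals $K$, as claimed. In the case $d = 2$, set $\alpha = 0$, $\beta = 1$, $q_1 = p$; then $P = 0$ and $Q = p$, and the unique nonzero degree is
\[
2(P+Q) + 2\beta + \alpha - 2 = 2p + 2 + 0 - 2 = 2p,
\]
with value $K$, as claimed.

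The only thing to double-check is that the indexing conventions used in Proposition~\ref{prop:prop13} and the notation $E((t+1)p+d)$ agree with the single-run specialization (this is essentially by definition of $E(s_1,\ldots,s_r)$ from Definition~\ref{defn:e(s_1)}), and that the edge cases $p = 0$ with $d \in \{1,2\}$ reduce to the two base computations carried out inside the proof of Proposition~\ref{prop:prop13} (namely $E(1)$ is the void complex with $\widetilde{H}_{-1} = K$, and $E(2)$ is a pair of disjoint points with $\widetilde{H}_0 = K$). There is no real obstacle here; the corollary is essentially a restatement of Proposition~\ref{prop:prop13} and Proposition~\ref{prop:prop9}~(ii) in the single-run case, so the proof will be just a few lines.
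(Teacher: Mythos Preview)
Your proposal is correct and follows essentially the same approach as the paper: invoke Proposition~\ref{prop:prop9}~(ii) to kill the cases $d\neq 1,2$, and then read off the $d=1$ and $d=2$ cases directly from Proposition~\ref{prop:prop13}. The paper's proof is exactly these two sentences; your additional checks on indexing and the $p=0$ base cases are fine but unnecessary.
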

  \begin{proof} By Proposition~\ref{prop:prop9}~(ii), if $d\neq 1,2$
  the homology is zero. In the cases where $d=1,2$, the result follows
 directly from Proposition~\ref{prop:prop13}.
\end{proof}
We end this section with the calculation of the homology of the
complement of the whole path complex of a cycle; this will give us the
top degree Betti numbers of the path ideal of a cycle. We will first
need a technical lemma.
\begin{lem}\label{lem:lem3} Let $R=K\left[x_1,\dots,x_n\right]$
be a polynomial ring over a field $K$, and suppose $\D_t(C_n)=\langle
F_1,F_2,\dots,F_n\rangle$ is the path complex of a cycle $C_n$ with
standard labeling.  Let $a,k,s,t \in \{1,\ldots,n\}$ be such that $
k<t$, and $a+s+t-1<n$. Suppose $s=(t+1)p+d$ where $p\geq 0$ and $0\leq
d < t+1$. Set $V=\{x_a,x_{a+1},\dots,x_{a+s+t-1}\}$
and $${\EE}=\langle
(F_{a})_{V}^{c},\dots,{(F_{a+s-1})}_{V}^{c},\{x_{a+s+t-k},x_{a+s+t-k+1},\dots,x_{a+s+t-1}\}_{V}^{c}
\rangle.$$
Then for all $i$ we have
$$\widetilde{H}_i({\EE})=\left\{
\begin{array}{ll}
K & d=1, i=2p\\
K & d=k+1, i=2p+1\\
0 & \mbox{otherwise.}
\end{array}
\right.$$
\end{lem}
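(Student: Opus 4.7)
We proceed by induction on $p$, assuming without loss of generality that $a = 1$, so $V = \{x_1, \ldots, x_{s+t}\}$ and the extra facet is $G = \{x_{s+t-k+1}, \ldots, x_{s+t}\}$.

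For the base case $p = 0$ we have $s = d$ with $0 \leq d \leq t$. If $d = 0$, then $\EE = \langle G_V^c\rangle$ is a single simplex, so all reduced homology vanishes. If $d = 1$, then $\EE = \langle \{x_{t+1}\}, \{x_1, \ldots, x_{t+1-k}\}\rangle$ is a disjoint union of two nonempty simplices (as $k < t$), so $\widetilde{H}_0(\EE) = K$ and the rest vanish, matching the formula. For $2 \leq d \leq t$, Lemma~\ref{lem:lem2}(i) peels $F_1$ (legitimate since $x_1$ lies in no other facet), producing $V' = \{x_{t+1}, \ldots, x_{d+t}\}$ and a homology degree shift of $-1$. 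Because $F_j \cap V' \supsetneq F_2 \cap V' = \{x_{t+1}\}$ for every $3 \leq j \leq d$, Lemma~\ref{lem:lem2}(ii) discards those $(F_j)_{V'}^c$, reducing the problem to $\EE' = \langle \{x_{t+2}, \ldots, x_{d+t}\},\ V' \setminus G\rangle$. Three regimes then arise: when $d \leq k$ we have $V' \subseteq G$, so $V' \setminus G = \emptyset$ and $\EE'$ collapses to a single simplex, which is acyclic; when $d = k+1$ the second facet $V' \setminus G = \{x_{t+1}\}$ is disjoint from $\{x_{t+2}, \ldots, x_{d+t}\}$, giving a disconnected $\EE'$ with $\widetilde{H}_0(\EE') = K$; when $d \geq k+2$ the facets share the nonempty common face $\{x_{t+2}, \ldots, x_{d+t-k}\}$, and any vertex in this intersection exhibits $\EE'$ as a cone, so Proposition~\ref{prop:prop2.16} forces $\widetilde{H}_*(\EE') = 0$. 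Combining with the degree shift yields the stated formula.

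For the inductive step $p \geq 1$ (so $s \geq t+1$), we imitate the proof of Proposition~\ref{prop:prop9}(i). Apply Lemma~\ref{lem:lem2}(i) to peel $F_1$, discard the redundant facets $(F_3)_{V'}^c, \ldots, (F_{t+1})_{V'}^c$ via Lemma~\ref{lem:lem2}(ii), and then apply Lemma~\ref{lem:lem2}(i) once more to peel the singleton $\{x_{t+1}\}$. The second peel is legitimate because $s \geq t+1 > k$ forces $x_{t+1} \notin G$, while $x_{t+1} \notin F_j$ for $j \geq t+2$ by definition of the standard labeling. What remains is an instance of the very hypothesis of the lemma, with starting index $a' = t+2$, run length $s' = s - (t+1) = (t+1)(p-1) + d$, tail length $k$ unchanged, on vertex set $\{x_{t+2}, \ldots, x_{s+t}\}$. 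The two peels shift the reduced-homology degree by $-2$, so
$$\widetilde{H}_i(\EE) \;\cong\; \widetilde{H}_{i-2}(\EE'),$$
where $\EE'$ denotes the reduced complex. Invoking the induction hypothesis at $p' = p - 1$ delivers the desired conclusion.

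The main obstacle is the base case: one must carefully track which facets are killed by Lemma~\ref{lem:lem2}(ii) after peeling $F_1$, identify the threshold values $d = k$ and $d = k+1$ at which $V' \setminus G$ either collapses to $\emptyset$ or becomes a single isolated vertex, and verify the cone structure that annihilates homology in the regime $d \geq k+2$. The inductive step itself is essentially the same two-peel move used in Proposition~\ref{prop:prop9}(i), with only a minor check that the extra facet $G$ does not interfere with the peeling when $s \geq t+1$.
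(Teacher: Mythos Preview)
Your proof is correct, but it proceeds along a genuinely different axis from the paper's argument. The paper peels off the \emph{extra} facet $G=\{x_{s+t-k+1},\ldots,x_{s+t}\}$ first (using $x_{s+t}$ as the cone witness), which produces a smaller instance of the same lemma on $V_1=V\setminus G$ with new parameters $s_1=s-k$ and $k_1=t-k+1$; the induction is then on $|V|$, and the value of $k$ alternates between $k$ and $t-k+1$ from one step to the next, so the two target residues $d=1$ and $d=k+1$ swap roles at each stage. You instead peel $F_1$ and then the singleton $\{x_{t+1}\}$, exactly mirroring the two-peel move of Proposition~\ref{prop:prop9}(i); this drops $s$ by $t+1$ while keeping $k$ fixed, so your induction is simply on $p$ with the tail parameter frozen. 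Your route is arguably more transparent, since the inductive step is a direct reuse of the mechanism already established in Proposition~\ref{prop:prop9}, and the base case is a clean three-way split on the position of $d$ relative to $k$ and $k+1$. The paper's route, by contrast, has a slightly more intricate inductive bookkeeping (two interacting parameters $d_1,k_1$ with case splits on $d\gtrless k$) but avoids having to extend the statement to the degenerate value $s'=0$, which you implicitly rely on when $p=1$, $d=0$. You handle that edge case correctly in your $d=0$ base clause, so there is no gap, but it is worth flagging that strictly speaking the lemma's hypotheses require $s\geq 1$.
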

   \begin{proof} Without loss of generality we can assume $a=1$ so that
  $ V=\{x_1,\dots,x_{s+t}\}$ and
     $${\EE}=\langle
    (F_{1})_{V}^{c},\dots,{(F_{s})}_{V}^{c},\{x_{s+t-k+1},\dots,x_{s+t}\}_{V}^{c}
    \rangle.$$
   Since $x_{s+t}\notin F_h$ for $1\leq h\leq s$, $\EE$ has standard
    decomposition
     $$\EE=\langle(F_1)_{V}^{c},(F_2)_{V}^{c},\dots,(F_{s})_{V}^{c}\rangle\cup \langle \{x_{s+t-k+1},x_{s+t-k+2},\dots,x_{s+t}\}_{V}^{c}\rangle$$
    and then from Lemma~\ref{lem:lem2} (i) and (ii), setting
    $V_1=V\setminus \{x_{s+t-k+1},x_{s+t-k+2},\dots,x_{s+t}\}$, we have
\begin{align}
\widetilde{H}_{i}(\EE)\cong& \widetilde{H}_{i-1}
\langle (F_1)_{V_1}^{c},(F_2)_{V_1}^{c},\dots,(F_{s-k})_{V_1}^{c},\{x_{s-k+1},\dots,x_{s+t-k}\}_{V_1}^{c},\{x_{s-k+2},\dots,x_{s+t-k}\}_{V_1}^{c},
\notag \\
&\hspace{.45in},\dots,{\{x_{s-1},\dots,x_{s+t-k}\}}_{V_1}^{c},{\{x_{s},\dots,x_{s+t-k}\}}_{V_1}^{c}\rangle) \notag\\
=&\widetilde{H}_{i-1}
(\langle (F_1)_{V_1}^{c},(F_2)_{V_1}^{c},\dots,(F_{s-k})_{V_1}^{c},{\{x_{s},\dots,x_{s+t-k}\}}_{V_1}^{c}\rangle ) \label{eqn:mylemma}
\end{align}
We prove our statement by induction on $|V|=s+t=(t+1)p+d+t$.
The base case is $|V|=d+t$, in which case $p=0$ and $d=s\geq 1$. There are
two cases to consider.
\begin{enumerate}
\item If $1 \leq d \leq k$, then $s \leq k$, and so by~(\ref{eqn:mylemma})
  $$\widetilde{H}_{i}(\EE)=\widetilde{H}_{i-1}( \langle \{x_{s},\dots,x_{s+t-k}\}_{V_1}^{c} \rangle ).$$
    The
  simplex ${\{x_{s},\dots,x_{s+t-k}\}}_{V_1}^{c}$ is not empty unless $s=d=1$, and hence we have
$$\widetilde{H}_i({\EE})=\left\{
\begin{array}{ll}
K & d=1, i=0 \\
0 & \mbox{otherwise.}
\end{array}
\right . $$
\item If $d > k$, we use~(\ref{eqn:mylemma}) to note that since
  $x_{s+t-k}\notin F_1 \cup \ldots\cup F_{s-k}$, the following is a
  standard decomposition$$\langle
  (F_1)_{V_1}^{c},(F_2)_{V_1}^{c},\dots,(F_{s-k})_{V_1}^{c},{\{x_{s},\dots,x_{s+t-k}\}}_{V_1}^{c}
  \rangle.$$
Using Lemma~\ref{lem:lem2} and~(\ref{eqn:mylemma}) along with the fact that $s=d \leq
t$, we find that if $V_2=V\setminus \{x_{s},\dots,x_{s+t}\}$, then
\begin{eqnarray*}
\widetilde{H}_{i}({\EE})&\cong&\widetilde{H}_{i-2}(\langle {\{x_{1},\dots,x_{s-1}\}}_{V_2}^{c},\{x_2,\ldots,x_{s-1}\}_{V_2}^c,\dots,{\{x_{s-k},\dots,x_{s-1}\}}_{V_2}^{c}\rangle)~\vspace{.1 in}\\
 &\cong&\widetilde{H}_{i-2}( \langle {\{x_{s-k},\dots,x_{s-1}\}}_{V_2}^{c}\rangle).
\end{eqnarray*}
Now the simplex $\{x_{s-k},\dots,x_{s-1}\}_{V_2}^c$ is nonempty
unless $s-k=1$, or in other words, $d=s=k+1$. Therefore
$$\widetilde{H}_i({\EE})=\left\{
\begin{array}{ll}
K &  d=k+1, i=1\\
0 & \mbox{otherwise.}
\end{array}\right . $$
\end{enumerate}
This settles the base case of the induction.
Now suppose $|V|=s+t>d+t$ and the theorem holds for all the cases
where $|V|<s+t$.  Since $|V_1|=(s-k)+t<|V|$ we shall apply
(\ref{eqn:mylemma}) and use the induction hypothesis on $V_1$, now
with the following parameters: $k_1=t-k+1$, $s_1=s-k=(t+1)p+d-k$ and
$$d_1=\left\{\begin{array}{ll}
d-k & d\geq k\\
d-k+t+1 & d<k
\end{array}
\right . \mbox{ \ \ \ and \ \ \ \  }
p_1=\left\{
\begin{array}{ll}
p & d\geq k\\
p-1 & d<k.
\end{array}
\right . $$
Applying the induction hypothesis on $V_1$ we see that
$\widetilde{H}_{i}({\EE})=0$ unless one of the following scenarios
happen, in which case $\widetilde{H}_{i}({\EE})=K$.
\begin{enumerate}
\item $d_1=1$ and $i-1=2p_1$.
  \begin{enumerate}
   \item When $d\geq k$, this means that $d=k+1$ and $i=2p+1$.
   \item When $d < k$, this means $d-k+t+1=1$ which implies that $0
     \leq d=k-t\leq 0$, and hence $d=0$ and $t=k$, which is not possible
      as we have assumed $k<t$.
  \end{enumerate}
\item $d_1=k_1+1$ and $i-1=2p_1+1$.
  \begin{enumerate}
   \item When $d\geq k$, this means that $d-k=t-k+1+1$ and so $d=t+2$
     which is not possible, as we have assumed $d<t+1$.
   \item When $d < k$, this means that $d-k+t+1=t-k+1+1$ and so
     $d=1$ and $i=2p_1+2=2p$.
  \end{enumerate}
\end{enumerate}
We conclude that $\widetilde{H}_{i}({\EE})=K$ only when $d=1$ and
$i=2p$, or $d=k+1$ and $i=2p+1$, and $\widetilde{H}_{i}({\EE})=0$
otherwise.
\end{proof}
\begin{lem}\label{lem:newlem1} 
Let $2\leq t \leq n$ and $\D=\D_t(C_n)$
be the path complex of a cycle $C_n$ with vertex set
$\XX=\{x_1,x_2,\dots, x_n\}$. Suppose $n=(t+1)p+d$ where $p\geq 1$,
$0\leq d \leq t$ and consider the following simplicial complexes
     \begin{eqnarray}
      \begin{array}{l}
       E_0=\langle
       (F_1)_{\XX}^{c},(F_2)_{\XX}^{c},\dots,(F_{n-t+1})_{\XX}^{c}\rangle=E(n-t+1)\\
       \\
       E_{k}=E_{k-1}\cup \langle
       (F_{n-k+1})_{\XX}^{c}\rangle\mbox{ for }k=1,2,\dots,t-1.
      \end{array}\label{eqn:204}
    \end{eqnarray}
Then for all $0 \leq k \leq t-2$, we have
  \begin{eqnarray*}
  \widetilde{H}_{i}(E_{k}\cap\langle{(F_{n-k})}_\XX^{c}\rangle)=\left\{
       \begin{array}{ll}
          K & d=0,\ i=2p-3\\
         K & d=t-k-1,\ i=2p-2  \\
           0 & \mbox{otherwise.}
        \end{array}\right.
 \end{eqnarray*}

\end{lem}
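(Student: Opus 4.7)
The plan is to identify the simplicial complex $\EE := E_k\cap\langle (F_{n-k})_\XX^c\rangle$ explicitly, and then to compute its homology via a standard decomposition followed by Lemma~\ref{lem:lem3}.

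First I would describe $\EE$ concretely. Since $\langle (F_{n-k})_\XX^c\rangle$ is a simplex on $V' := \XX\setminus F_{n-k}$, the intersection $\EE$ consists of the faces of $E_k$ contained in $V'$, with facets $V'\setminus (F_j\cap V')$ as $F_j$ ranges over the facets of $E_k$, namely $j\in\{1,\ldots,n-t+1\}\cup\{n-k+1,\ldots,n\}$. It is convenient to relabel $V' = \{y_1,\ldots,y_{n-t}\}$ with $y_i = x_{t-k-1+i}$ (indices mod $n$), so that $V'$ consists of consecutive cycle vertices avoiding $F_{n-k}$. A routine case analysis of $F_j\cap F_{n-k}$ in the standard labeling gives the list of all excluded pieces $F_j\cap V'$; using Lemma~\ref{lem:lem2}(ii) to drop dominated facets then leaves, in the principal case $n\geq 2t+2$, three families of excluded pieces: the singleton $\{y_1\}$ (from $F_{n-k+1}$ if $k\geq 1$, or from $F_1$ if $k=0$); a ``middle run'' $\{y_i,\ldots,y_{i+t-1}\}$ for $i=2,\ldots,n-2t$; and a right-aligned piece $\{y_{n-2t+k+2},\ldots,y_{n-t}\}$ of size $t-k-1$ (from $F_{n-t+1}$).

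Next I would apply the standard decomposition $\EE = \EE_1\cup\EE_2$, where $\EE_1 = \langle V'\setminus\{y_1\}\rangle$ is a simplex and $\EE_2$ is the union of the facets coming from the middle run and the right-aligned piece. Every facet of $\EE_2$ contains $y_1$, so $\EE_2$ is a cone on $y_1$ and hence acyclic by Proposition~\ref{prop:prop2.16}. Lemma~\ref{lem:lem2}(i) then yields
$$\widetilde{H}_i(\EE)\cong \widetilde{H}_{i-1}(\EE_1\cap\EE_2).$$
On $V'' := V'\setminus\{y_1\} = \{y_2,\ldots,y_{n-t}\}$, the intersection $\EE_1\cap\EE_2$ has facets $V''\setminus S$ for $S$ ranging over the middle run and the right-aligned piece, and is therefore exactly of the form treated in Lemma~\ref{lem:lem3}: a run of length $s := n-2t-1$ starting at $y_2$, together with one extra right-aligned partial facet of size $t-k-1$, on a vertex set of size $s+t = n-t-1$.

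Finally I would invoke Lemma~\ref{lem:lem3} with $s = n-2t-1$ and, in that lemma's notation, $k_{\mathrm{L}} = t-k-1$. A short division using $n=(t+1)p+d$ writes $s = (t+1)p'+d'$ with $(p',d') = (p-2,\,d+1)$ when $0\leq d\leq t-1$, and $(p',d') = (p-1,\,0)$ when $d=t$. Lemma~\ref{lem:lem3} tells us $\widetilde{H}_\ast(\EE_1\cap\EE_2)$ is nonzero exactly when either $d'=1$ and $i=2p'$, or $d'=k_{\mathrm{L}}+1 = t-k$ and $i=2p'+1$. Translating back, the first condition becomes $d=0$ with $\widetilde{H}_{2p-4}(\EE_1\cap\EE_2)=K$, and the second becomes $d=t-k-1$ with $\widetilde{H}_{2p-3}(\EE_1\cap\EE_2)=K$; after the Mayer--Vietoris shift by one, these give exactly the two nonzero cases $(d=0,\,i=2p-3)$ and $(d=t-k-1,\,i=2p-2)$ claimed in the statement. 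The main obstacle is the edge case $p=1$, where $n\leq 2t+1$ forces the middle run to collapse and Lemma~\ref{lem:lem3} does not formally apply; here I would verify by direct computation that for each $d\in\{0,\ldots,t\}$ the complex $\EE$ reduces to $\{\emptyset\}$, a contractible simplex, a disjoint union of two contractible pieces, or the union of two overlapping simplices, and that the resulting homology matches the claim.
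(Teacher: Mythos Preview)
Your proposal is correct and follows essentially the same route as the paper: both identify $E_k\cap\langle(F_{n-k})_\XX^c\rangle$ on $\XX\setminus F_{n-k}$, use Lemma~\ref{lem:lem2}(ii) to reduce the facet list to a singleton, a middle run of full-size facets, and one short right-aligned piece, then peel off the singleton via Lemma~\ref{lem:lem2}(i) and invoke Lemma~\ref{lem:lem3} with $s=n-2t-1$ and (in that lemma's notation) $k=t-k-1$, treating the boundary case $p=1$ by a direct check. The only difference is cosmetic (your relabeling by $y_i$ versus the paper's explicit $x$-indices).
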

\begin{proof}
  Setting $\XX^{\prime}=\XX\setminus
  F_{n-k}=\{x_{t-k},\ldots,x_{n-k-1}\}$ we can write
\begin{align}
E_{k}\cap \langle {(F_{n-k})}_\XX^{c}\rangle= \langle
 (F_1)_{\XX'}^c,\ldots,(F_{n-t+1})_{\XX'}^c,(F_{n-k+1})_{\XX'}^c,
 \ldots,(F_n)_{\XX'}^c\rangle.\label{e:intersection}
  \end{align}
 We now compute the
 $(F_h)_{\XX'}^c=\{x_{h},\ldots,x_{h+(t-1)}\}_{\XX'}^c$ appearing in
\ref{e:intersection}).
 \begin{itemize}
  \item When $1 \leq h \leq t-k$ it is clear that
    $$(F_{h})_{\XX'}^c= \{x_{t-k},x_{t-k+1},\dots,x_{t+h-1}\}_{\XX'}^c.$$
 \item When  $t-k+1 \leq h \leq n-t-k-1$ then $2t-k\leq h+t-1 \leq n-k-2$,
  and so
    $$(F_{h})_{\XX'}^c=
    \{x_{t-k},\ldots,x_{h-1},x_{h+t},\ldots,x_{n-k-1}\}.$$
   \item When $n-k-t\leq h \leq n-t+1$, then $n-k-1\leq h+t-1 \leq n$,
     and therefore
  $$(F_{h})_{\XX'}^c = \{x_{h},\dots,x_{n-k-1}\}_{\XX'}^c.$$
 \item When $h=n-j$ for $0 \leq j \leq k-1$. Then $t-k\leq -j+(t-1)\leq
     t-1$ and so we have
    $$F_{n-j}=\{x_{n-j},\ldots,x_{n-j+(t-1)}\}= \{x_{n-j},
  \ldots,x_n,x_1,\ldots, x_{t-j-1}\} \mod n$$ which implies that
   $$(F_{n-j})_{\XX'}^c=\{x_{t-k},x_{t-k+1},\dots,x_{t-j-1}\}_{\XX'}^c.$$
 \end{itemize}
  From the observations above, (\ref{e:intersection}) and
  Lemma~\ref{lem:lem2}~(ii) we see that
  \begin{eqnarray}
     E_{k}\cap \langle {(F_{n-k})}_\XX^{c}\rangle= \langle
     \{x_{t-k}\}, F_{t-k+1},\dots, F_{n-t-k-1}, \{x_{n-t+1},
       \ldots,x_{n-k-1}\}\rangle^{c}_{\XX^{\prime}}.\label{eqn:two-first}
\end{eqnarray}
We now consider the  following scenarios.
\begin{enumerate}
   \item Suppose $p=1$.  In this situation, $n=t+d+1\leq 2t+1$ which
     implies that $n-t-k-1\leq t-k$. Therefore, (\ref{eqn:two-first})
      becomes
    \begin{eqnarray}
     E_{k}\cap \langle {(F_{n-k})}_\XX^{c}\rangle= \langle
    \{x_{t-k}\}, \{x_{n-t+1},
       \ldots,x_{n-k-1}\}\rangle^{c}_{\XX^{\prime}}.\label{eqn:two-third}
    \end{eqnarray}
      \begin{enumerate}
      \item If $d\leq t-k-2$, then $n-t+1=t+d+1-t+1=d+2\leq t-k$. As
        well, since $n\geq t+1$, we have $n-k-1\geq t-k$. It follows
       that in this situation, $x_{t-k}\in \{x_{n-t+1},
       \ldots,x_{n-k-1}\}$ which means that (\ref{eqn:two-third})
         becomes $E_{k}\cap \langle {(F_{n-k})}_\XX^{c}\rangle=
        \langle \{x_{t-k}\}^{c}_{\XX^{\prime}}\rangle.$
      Also note that $\XX'=\{x_{t-k}\}$ only when $d=0$.
     It follows that
    $$\widetilde{H}_i(E_{k}\cap \langle {(F_{n-k})}^{c}\rangle)
         \cong\left\{\begin{array}{ll}
             K & d=0,\ i=-1 \\
           0 & \mbox{otherwise.}
           \end{array}\right. $$
      \item If $d > t-k-2$. In this situation, $x_{t-k}\notin
        \{x_{n-t+1}, \ldots,x_{n-k-1}\}$ which means that we can apply
        Lemma~\ref{lem:lem2}~(i), with
        $\XX^{\prime\prime}=\XX^{\prime}\setminus \{x_{t-k}\}$ to find
       that for all $i$
        $$\widetilde{H}_{i}(E_{k}\cap\langle
        {(F_{n-k})}^{c}\rangle)=\widetilde{H}_{i-1}( \langle
        \{x_{n-t+1},\dots,x_{n-k-1}\}
        \rangle^{c}_{\XX^{\prime\prime}}).$$
   Moreover, $\XX''=\{x_{n-t+1}, \ldots,x_{n-k-1}\}$ only when
       $d=t-k-1$, and so we have
   $$\widetilde{H}_i(E_{k}\cap \langle {(F_{n-k})}^{c}\rangle)
         \cong\left\{\begin{array}{ll}
             K & d=t-k-1,\ i=0 \\
             0 & \mbox{otherwise.}
           \end{array}\right. $$
     \end{enumerate}

 \item Suppose $p\geq 2$.  In this case it is easy to see that
    $n-t-k-1>t-k$ and $n-t+1 > t-k$. Therefore, we can apply
   Lemma~\ref{lem:lem2}~(i) with
     $\XX^{\prime\prime}=\XX^{\prime}\setminus \{x_{t-k}\}$ to
     (\ref{eqn:two-first}) to conclude that for all $i$
  \begin{align*}
  \widetilde{H}_{i}(E_{k}\cap
  \langle{(F_{n-k})}_\XX^{c}\rangle)=\widetilde{H}_{i-1}(\langle{F_{t-k+1}},
   \dots,
   {F_{n-t-k-1}},
  {\{x_{n-t+1},\dots,x_{n-k-1}\}}\rangle^{c}_{\XX^{\prime\prime}}).
  \end{align*}
   Now we use Lemma~\ref{lem:lem3} with values $a=t-k+1$ and
    $s=n-2t-1=(p-2)(t+1)+d+1$ to conclude that
    \begin{align}
    \widetilde{H}_{i}(E_{k}\cap
   \langle{(F_{n-k})}_\XX^{c}\rangle)=\left\{
      \begin{array}{ll}
        K& d=0,\  i=2p-3\\
        K& d=t-k-1,\ i=2p-2\\
         0 & \mbox{otherwise}.
       \end{array}\right . \notag
    \end{align}
     \end{enumerate}
\end{proof}
\begin{theorem}\label{lem:newlem} Let $2\leq t \leq n$ and $\D=\D_t(C_n)$
be the path complex of a cycle $C_n$ with vertex set
$\XX=\{x_1,x_2,\dots, x_n\}$. Suppose $n=(t+1)p+d$ where $p\geq 0$,
$0\leq d \leq t$. Then for all $i$
$$\widetilde{H}_i(\D_\XX^c)=\left\{\begin{array}{ll}
K^t & d=0,\ i=2p-2, \ p>0\\
K & d \neq 0,\ i=2p-1\\
0 & \mbox{otherwise.}\\
\end{array}\right.$$
\end{theorem}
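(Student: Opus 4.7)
The plan is to compute $\widetilde{H}_i(\D_\XX^c)$ by building $\D_\XX^c$ through the filtration $E_0 \subset E_1 \subset \cdots \subset E_{t-1} = \D_\XX^c$ introduced in Lemma~\ref{lem:newlem1}, and by analyzing each single-facet attachment via the Mayer--Vietoris sequence. Since the added facet $\langle (F_{n-k+1})^c_\XX\rangle$ at step $k$ is a simplex, it has trivial reduced homology, so Mayer--Vietoris degenerates to
$$\cdots \to \widetilde{H}_i(\cap_{k-1}) \to \widetilde{H}_i(E_{k-1}) \to \widetilde{H}_i(E_k) \to \widetilde{H}_{i-1}(\cap_{k-1}) \to \cdots,$$
where $\cap_{k-1} = E_{k-1}\cap \langle (F_{n-k+1})^c_\XX\rangle$. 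Lemma~\ref{lem:newlem1} (after the reindexing $k_0 = k-1$) tells us that $\widetilde{H}_i(\cap_{k-1}) = K$ precisely when $d=0,\, i = 2p-3$ or $d = t-k,\, i=2p-2$, and is zero otherwise.

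Before starting the induction I would dispose of the degenerate case $p = 0$: here $n \geq t$ forces $n = t$ and $d = t$, so $\D$ is a single simplex and $\D_\XX^c = \{\emptyset\}$ has $\widetilde{H}_{-1} = K$, matching the formula. For $p \geq 1$ the base of the induction is $E_0 = E(n-t+1)$. Writing $n-t+1$ in the form $(t+1)p' + d'$ and invoking Corollary~\ref{col:one-run} would yield $\widetilde{H}_{2p-2}(E_0) = K$ when $d=0$; $\widetilde{H}_i(E_0) = 0$ for all $i$ when $1 \leq d \leq t-1$; and $\widetilde{H}_{2p-1}(E_0) = K$ when $d=t$.

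Next I would split the induction into three cases according to $d$. If $d = t$, every intersection $\cap_{k-1}$ is acyclic by Lemma~\ref{lem:newlem1}, so Mayer--Vietoris yields $\widetilde{H}_i(E_k) \cong \widetilde{H}_i(E_{k-1})$ in every degree, and the result for $E_0$ transfers unchanged to $\D_\XX^c$. If $1 \leq d \leq t-1$, the only step where $\cap_{k-1}$ is not acyclic is $k = t-d$, where $\widetilde{H}_{2p-2}(\cap_{k-1}) = K$. The earlier steps preserve the vanishing of $\widetilde{H}_\ast(E_{k-1})$, and at step $k = t-d$ the relevant segment
$$0 \to \widetilde{H}_{2p-1}(E_k) \to K \to 0$$
forces $\widetilde{H}_{2p-1}(E_k) = K$, which then survives unchanged through the remaining acyclic-intersection steps.

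The main technical work is the case $d = 0$. Here $\widetilde{H}_{2p-2}(E_0) = K$, while at every inductive step Lemma~\ref{lem:newlem1} gives $\widetilde{H}_{2p-3}(\cap_{k-1}) = K$ and $\widetilde{H}_{2p-2}(\cap_{k-1}) = 0$. The degree $2p-2$ portion of the Mayer--Vietoris sequence reads
$$0 \to \widetilde{H}_{2p-2}(E_{k-1}) \to \widetilde{H}_{2p-2}(E_k) \to K \to \widetilde{H}_{2p-3}(E_{k-1}),$$
and a parallel induction establishing $\widetilde{H}_{2p-3}(E_{k-1}) = 0$ at every stage collapses this to a short exact sequence which splits over the field $K$. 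Hence $\dim \widetilde{H}_{2p-2}(E_k) = \dim \widetilde{H}_{2p-2}(E_{k-1}) + 1$, and after the $t-1$ inductive steps we obtain $\widetilde{H}_{2p-2}(\D_\XX^c) = K^t$. The main obstacle is exactly this bookkeeping: one must simultaneously track the accumulation at $i = 2p-2$ and the persistent vanishing at the adjacent degrees, verifying at every stage that the pieces of the Mayer--Vietoris sequence fed in by Lemma~\ref{lem:newlem1} cannot create homology in any unintended dimension.
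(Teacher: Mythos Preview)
Your proposal is correct and follows essentially the same approach as the paper: the same filtration $E_0\subset\cdots\subset E_{t-1}=\D_\XX^c$, the same base computation of $\widetilde{H}_i(E_0)$ via Corollary~\ref{col:one-run}, and the same three-case analysis ($d=0$, $0<d<t$, $d=t$) using Mayer--Vietoris together with Lemma~\ref{lem:newlem1}. Your treatment of the $d=0$ case is in fact slightly more explicit than the paper's in articulating why the adjacent degree $2p-3$ stays zero throughout the filtration, which is exactly what is needed to make the rank count $\dim \widetilde{H}_{2p-2}(E_k)=k+1$ go through.
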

  \begin{proof} If $p=0$, then $n=t$ and our claim is obvious,
    so we assume that $p\geq 1$ and therefore $n\geq t+1$.

If we consider the simplicial complexes in (\ref{eqn:204}) then clearly 
 we have $\D_\XX^c=E_{t-1}$. We start with $E_0$ and apply the
   Mayer-Vietoris sequence repeatedly to calculate the homologies of
    the $E_k$.
    Since $E_0=E(n-t+1)$, we find
   $$n-t+1=(t+1)p+d-t+1= \left\{\begin{array}{ll}
      (t+1)p+1 & d=t\\
      (t+1)p & d=t-1\\
      (t+1)(p-1)+ d+2 & d<t-1
    \end{array}\right.$$
   which by Corollary~\ref{col:one-run} implies that
   \begin{eqnarray}\widetilde{H}_i(E_0)= \left\{
          \begin{array}{ll}
           K & d=0,\ i=2p-2\\
           K & d=t,\ i=2p-1\\
           0 & \mbox{otherwise.}
          \end{array}\right. \label{eqn:E0}
   \end{eqnarray}
 In order to find the homologies of $E_{t-1}$ we shall
   recursively apply the Mayer-Vietoris sequence as follows. For a
  fixed $1\leq k \leq t-1$ we have the following exact sequence
   \begin{eqnarray} \widetilde{H}_{i}(E_{k-1} \cap
   \langle
        {(F_{n-k+1})}_\XX^{c}\rangle)\rightarrow\widetilde{H}_i(E_{k-1})\rightarrow
        \widetilde{H}_i(E_{k})\rightarrow \widetilde{H}_{i-1}(E_{k-1}
        \cap \langle {(F_{n-k+1})}_\XX^{c}\rangle).\label{eqn:MV}
   \end{eqnarray}
\begin{enumerate}
 \item If $0 <d <t$ then by Lemma~\ref{lem:newlem1} we know that
   $\widetilde{H}_{i}(E_{k-1}\cap\langle{(F_{n-k+1})}_\XX^{c}\rangle)$
   is nonzero only when $i=2p-2$ and $k=t-d$. We apply this
   observation and~(\ref{eqn:E0}) to the exact sequence~(\ref{eqn:MV})
  to see that $$\widetilde{H}_i(E_{k})=\widetilde{H}_i(E_{k-1})=
   \widetilde{H}_i(E_{0})=0 \mbox{ for } 1 \leq k\leq t-d-1.$$ Once
  again we use~(\ref{eqn:MV}) to observe that
  \begin{align*}
  \widetilde{H}_i(E_{k})&=\left\{
     \begin{array}{lll}
       0 & 1 \leq k\leq t-d-1\\
      \widetilde{H}_{i-1}(E_{k-1}\cap \langle {(F_{n-k+1})}^{c}\rangle)& k=t-d\\
        \widetilde{H}_i(E_{t-d}) & t-d< k \leq t-1
      \end{array}\right. \notag \\
                      &=\left\{
      \begin{array}{lll}
        K& k\geq t-d, \ i=2p-1\\
        0& \mbox{otherwise}.
      \end{array}\right.
   \end{align*}
  We can conclude that in this case
   $$\widetilde{H}_i(E_{t-1})=\left\{
      \begin{array}{lll}
        K& i=2p-1\\
      0& \mbox{otherwise}.
     \end{array}\right. $$
  \item If $d=t$ then by Lemma~\ref{lem:newlem1} we know that
    $\widetilde{H}_{i}(E_{k-1}\cap\langle{(F_{n-k+1})}_\XX^{c}\rangle)$
    is always zero. We apply this fact along with~(\ref{eqn:E0}) to
    the sequence in (\ref{eqn:MV}) to observe
    that $$\widetilde{H}_i(E_{k})\cong\widetilde{H}_i(E_{0})=
       \left\{\begin{array}{ll}
         K &  i=2p-1\\
         0 & \mbox{otherwise}
              \end{array}\right.
     \ \ \mbox{ for } k\in\{1,2,\dots,t-1\}.  $$
\item If $d=0$ then by Lemma~\ref{lem:newlem1} we know that
  $\widetilde{H}_{i}(E_{k-1}\cap\langle{(F_{n-k+1})}_\XX^{c}\rangle)$
  is zero unless $i=2p-3$, and by (\ref{eqn:E0}) we know
  $\widetilde{H}_{i}(E_0)$ is zero unless $i=2p-2$. Applying these
  facts to~(\ref{eqn:MV}) we see
  that $$\widetilde{H}_{i}(E_k)=\widetilde{H}_{i}(E_0)=0\ \ \mbox{ for }i \neq 2p-2.$$
  When $i=2p-2$, the sequence~(\ref{eqn:MV}) produces an exact
  sequence $$0\longrightarrow
 \overbrace{\widetilde{H}_{2p-2}(E_{0})}^{K} \longrightarrow
  \widetilde{H}_{2p-2}(E_{1}) \longrightarrow
  \overbrace{\widetilde{H}_{2p-3}(E_{0}\cap \langle
    {(F_{n})}^{c}\rangle)}^{K}\longrightarrow 0.$$
Therefore $$\widetilde{H}_i(E_1)=\left\{
    \begin{array}{ll}
      K^2 & i=2p-2 \\
      0  & \mbox{otherwise.}
    \end{array} \right.$$

We repeat the above method, recursively , for values $k=2,3,\dots,t-1$
$$0 \longrightarrow \overbrace{\widetilde{H}_{2p-2}(E_{k-1})}^{K^k}
\longrightarrow \widetilde{H}_{2p-2}(E_{k})\longrightarrow
\overbrace{\widetilde{H}_{2p-3}(E_{k-1}\cap \langle
  {(F_{n-k+1})}^{c}\rangle)}^{K}\longrightarrow 0$$ and conclude that
for $1\leq k \leq t-1$
$$\widetilde{H}_i(E_k)=\left\{
   \begin{array}{ll}
     K^{k+1} & i=2p-2 \\
      0  & \mbox{otherwise.}
   \end{array} \right.$$
\end{enumerate}
We put this all together $$\widetilde{H}_i(E_{t-1})=\left\{
   \begin{array}{ll}
          K^t & d=0,\ i=2p-2,\ p>0 \\
          K   & d\neq 0,\ i=2p-1\\
          0  & \mbox{otherwise}
   \end{array}\right.$$ and this proves the statement of the theorem.
\end{proof}


\section{The Betti numbers}


We are now ready to apply the homological calculations from the
previous section to compute the top degree Betti numbers of path
ideals.  If $I$ is the degree $t$ path ideal of a cycle, then
\begin{align}
\beta_{i,j}(R/I)=0 \mbox{ for all }i\geq 1 \mbox{ and } j>ti,\label{e:gtti}
\end{align}
see for example~\cite{Jacques2004}~3.3.4.  By Theorem~\ref{col:col15},
to compute the Betti numbers of $I$ of degree $n$ we should consider $\D^{c}$.

\begin{theorem}[{\bf Betti numbers of degree $n$}]\label{theorem:theorem6.2}
Let $p$, $t$, $n$, $d$ be integers such that  $n=(t+1)p+d$, where $p\geq 0$, $0\leq d \leq t$, and $2\leq t\leq n$. If $C_n$ is a cycle over $n$ vertices, then
$$\beta_{i,n}(R/I_t(C_n))=\left\{\begin{array}{lll}
t && d= 0,    \ \displaystyle i=2\left (\frac{n}{t+1}\right )=2p\\
&&\\
1 &&d\neq 0, \ \displaystyle i=2\left (\frac{n-d}{t+1}\right )+1=2p+1 \\
&\hspace{.3in} &\\
0 && \mbox{otherwise}.
\end{array}
\right.$$
\end{theorem}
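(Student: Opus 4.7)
The plan is to apply Hochster's formula (Theorem~\ref{col:col15}) to reduce the computation of $\beta_{i,n}$ to a single reduced homology computation, and then invoke Theorem~\ref{lem:newlem} to finish.

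First I would observe that the sum in Hochster's formula collapses when $|\ver(\Gamma)|=n$. Indeed, any induced subcollection of $\D=\D_t(C_n)$ is of the form $\D_\YY$ for some $\YY\subseteq\XX$, and its vertex set is contained in $\YY$. Hence $|\ver(\D_\YY)|=n$ forces $\YY=\XX$, which gives $\D_\YY=\D$. Thus Theorem~\ref{col:col15} specializes to
$$\beta_{i,n}(R/I_t(C_n))=\dim_{K}\widetilde{H}_{i-2}(\D^c_{\XX}).$$

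Next I would plug in Theorem~\ref{lem:newlem}, which computes $\widetilde{H}_j(\D^c_\XX)$ for every $j$ under the decomposition $n=(t+1)p+d$. The remaining task is purely an index shift: set $j=i-2$ and solve.
\begin{itemize}
\item If $d=0$ and $p>0$, the only nonzero homology occurs at $j=2p-2$, with dimension $t$; this corresponds to $i=2p=2n/(t+1)$, giving $\beta_{i,n}=t$.
\item If $d\neq 0$, the only nonzero homology occurs at $j=2p-1$, with dimension $1$; this corresponds to $i=2p+1=2(n-d)/(t+1)+1$, giving $\beta_{i,n}=1$.
\item In all other cases, $\widetilde{H}_{i-2}(\D^c_\XX)=0$, so $\beta_{i,n}=0$.
\end{itemize}
The degenerate case $p=0$ (which forces $d=t=n$ and hence $\D=\langle\XX\rangle$, so $\D^c_\XX=\{\emptyset\}$) is already covered by Theorem~\ref{lem:newlem} and produces $\beta_{1,n}=1$, consistent with the formula.

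There is no real obstacle here: the homological heavy lifting was done in Theorems~\ref{col:col15} and~\ref{lem:newlem}. The only mild care required is to verify that the cases in the statement of the present theorem match the index dictionary, and to justify the collapse of Hochster's sum — both of which are straightforward.
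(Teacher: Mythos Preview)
Your proof is correct and follows exactly the same approach as the paper: apply Theorem~\ref{col:col15} to reduce $\beta_{i,n}$ to $\dim_K\widetilde{H}_{i-2}(\D_\XX^c)$, then read off the answer from Theorem~\ref{lem:newlem}. You have simply spelled out the collapse of the sum and the index shift in more detail than the paper does.
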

       \begin{proof}  Suppose $\D=\D_t(C_n)$. By Theorem~\ref{col:col15}
      $\beta_{i,n}(R/I_t(C_n))=\dim_{K}\widetilde{H}_{i-2}(\D_\XX^{c})$
         and the result now follows directly from
         Theorem~\ref{lem:newlem}.
       \end{proof}

From Hochster's formula we see that computing Betti numbers of degree
less than $n$ comes down to counting induced subcollections of certain
kinds. 

\begin{defn}\label{d:eligible} Let $i$ and $j$ be positive integers.
We call an induced subcollection $\Gamma$ of $\D_t(C_n)$ an {\bf
$(i,j)$-eligible subcollection} of $\D_t(C_n)$ if $\Gamma$ is
composed of disjoint runs of lengths
 \begin{eqnarray} (t+1)p_1+1,\dots, (t+1)p_{\alpha}+1, (t+1)q_1+2,
 \ldots, (t+1)q_{\beta}+2\label{eqn:length1}
 \end{eqnarray}
for nonnegative integers $\alpha, \beta,
p_1,p_2,\dots,p_{\alpha},q_1,q_2,\dots,q_{\beta}$, which satisfy the
following conditions
$$\begin{array}{lll} j&=&(t+1)(P+Q)+t(\alpha+\beta)+\beta
 \\ i&=&2(P+Q)+2\beta+\alpha,
  \end{array}$$
where $P=\sum_{i=1}^{\alpha} p_i$ and $Q=\sum_{i=1}^{\beta} q_i$.
\end{defn}

The next theorem is similar to a statement proved for the edge
ideal of a cycle in~\cite{Jacques2004}.

\begin{theorem}\label{lem:lem15} Let $I=I(\Lambda)$ be the
facet ideal of an induced subcollection $\Lambda$ of
$\D_t(C_n)$. Suppose $i$ and $j$ are integers with $i\leq
j<n$. Then the ${\mathbb N}$-graded Betti number $\beta_{i,j}(R/I)$
is the number of $(i,j)$-eligible  subcollections of $\Lambda$.
\end{theorem}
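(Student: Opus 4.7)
The plan is to apply Hochster's formula (Theorem~\ref{col:col15}) to $I=I(\Lambda)$ term by term, and then invoke the structural and homological results of Sections~3 and~4. Hochster's formula expresses $\beta_{i,j}(R/I)$ as $\sum_\Gamma \dim_K \widetilde{H}_{i-2}(\Gamma^c_{\ver(\Gamma)})$, where $\Gamma$ ranges over induced subcollections of $\Lambda$ with exactly $j$ vertices. Because $\Lambda$ is itself induced in $\D_t(C_n)$, for any $\YY\subseteq\ver(\Lambda)$ the induced subcollection $\Lambda_\YY$ coincides with $(\D_t(C_n))_\YY$, so each such $\Gamma$ is simultaneously an induced subcollection of $\D_t(C_n)$; and since $|\ver(\Gamma)|=j<n$, it is a proper one.

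By Proposition~\ref{prop:prop3.3} each connected component of such a $\Gamma$ is then a run. Decompose $\Gamma$ as a disjoint union of runs of lengths $s_1,\dots,s_r$, and write $s_k=(t+1)p_k+d_k$ with $0\le d_k\le t$. Proposition~\ref{prop:prop9}~(ii) immediately kills every term in which some $d_k\notin\{1,2\}$. For the surviving $\Gamma$, reindex so that $\alpha$ of the runs have $d_k=1$ and the remaining $\beta$ have $d_k=2$; then $\Gamma^c_{\ver(\Gamma)}$ is exactly the complex
$$E((t+1)p_1+1,\dots,(t+1)p_\alpha+1,(t+1)q_1+2,\dots,(t+1)q_\beta+2)$$
analyzed in Proposition~\ref{prop:prop13}, which yields $\dim_K \widetilde{H}_{i-2}(\Gamma^c)=1$ precisely when $i=2(P+Q)+\alpha+2\beta$ and $0$ otherwise, with $P=\sum p_k$ and $Q=\sum q_k$.

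The last step is to read off the vertex count. Since the runs in $\Gamma$ are vertex-disjoint, a run of length $s_k$ contributes $s_k+t-1$ vertices, and summing gives
$$j=|\ver(\Gamma)|=(t+1)(P+Q)+t(\alpha+\beta)+\beta,$$
which is exactly the second numerical condition in Definition~\ref{d:eligible}; together with $i=2(P+Q)+\alpha+2\beta$ from the previous step, the surviving $\Gamma$ in the Hochster sum are in bijection with the $(i,j)$-eligible induced subcollections of $\Lambda$, each contributing $1$. The main obstacle is conceptual rather than computational and appears in the first step: one must check that induced subcollections of the arbitrary $\Lambda$ behave exactly like proper induced subcollections of $\D_t(C_n)$, so that Proposition~\ref{prop:prop3.3} and the complements handled in Section~4 apply verbatim. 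Once that reduction is in place, the remainder is direct substitution into Propositions~\ref{prop:prop9} and~\ref{prop:prop13}.
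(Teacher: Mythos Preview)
Your proof is correct and follows essentially the same route as the paper's own argument: Hochster's formula (Theorem~\ref{col:col15}) reduces the Betti number to a sum over induced subcollections of $\Lambda$, Proposition~\ref{prop:prop3.3} decomposes each such $\Gamma$ into runs, Proposition~\ref{prop:prop9}~(ii) eliminates all runs with $d_k\notin\{1,2\}$, and Proposition~\ref{prop:prop13} together with the vertex count singles out exactly the $(i,j)$-eligible subcollections, each contributing~$1$. If anything, you are slightly more explicit than the paper in justifying why induced subcollections of $\Lambda$ are proper induced subcollections of $\D_t(C_n)$ (using $j<n$) so that Proposition~\ref{prop:prop3.3} applies.
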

     \begin{proof} Since $\D(I)=\Lambda$ from Theorem~\ref{col:col15}
    we have $$\beta_{i,j}(R/I)={\displaystyle\sum_{\Gamma\subset
          \Lambda,|\ver (\Gamma)|=j}}\dim_K
      \widetilde{H}_{i-2}(\Gamma_{\ver (\Gamma)}^c)$$ where $\ver
      (\Gamma)$ is the vertex set of $\Gamma$ and the sum is taken
      over induced subcollections $\Gamma$ of $\Lambda$.

    Each induced subcollection of $\Lambda$ is clearly an induced
    subcollection of $\D_t(C_n)$, and can therefore be written as a
    disjoint union of runs. So from Proposition~\ref{prop:prop9}
   we can conclude the only $\Gamma$ whose complements have nonzero
  homology are those corresponding to run sequences of the form
   (\ref{eqn:length1}). Such subcollections have $j$ vertices where
    by Definition~\ref{defn:defn3.5}
   \begin{eqnarray} \nonumber j &=& ((t+1)p_1+t)+\dots+((t+1)p_{\alpha}+t)+
    ((t+1)q_{1}+t+1)\dots+((t+1)q_{\beta}+t+1)~\vspace{.1
        in}\\ &=&(t+1)(P+Q)+t(\alpha+\beta)+\beta. \label{eqn:IT}
\end{eqnarray}
  So $$\Gamma_{\ver
    (\Gamma)}^c=E((t+1)p_1+1,\dots,(t+1)p_{\alpha}+1,(t+1)q_1+2,\dots,
   (t+1)q_{\beta}+2)$$ and by Proposition~\ref{prop:prop13} we have
    \begin{eqnarray}\dim_{K}(\widetilde{H}_{i-2}
    (\Gamma_{\ver(\Gamma)}^c))= \left\{
      \begin{array}{ll}
     1 & i=2(P+Q)+2\beta+\alpha \\ 0 & \mbox{otherwise.}
    \end{array}\right. \label{eqn:ITT}
  \end{eqnarray}
   From (\ref{eqn:IT}) and (\ref{eqn:ITT}) we see that each induced
   subcollection $\Gamma$ corresponding to a run sequence as
  in~(\ref{eqn:length1}) contributes 1 unit to $\beta_{i,j}$ if
  and only if $$\begin{array}{lll}
     j&=&(t+1)(P+Q)+t(\alpha+\beta)+\beta
     \\ i&=&2(P+Q)+2\beta+\alpha.
\end{array}$$
\end{proof}

\begin{theorem}\label{nonzerobetinumbers} Let $i,j$ be integers and $i\leq j< n$ and suppose
 $n=(t+1)p+d$ and $d<t+1$. If $\beta_{i,j}(R/I_t(C_n))\neq 0$ we
 have 
\begin{enumerate}
\item $j\leq it$;
\item $j-i \leq (t-1)p$;
\item $i<2p$ \ \ if $d=0$; 
\item $i \leq 2p+1$ \ \ if $d \neq 0$. 
\end{enumerate}
\end{theorem}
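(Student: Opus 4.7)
The plan is to derive all four bounds from the characterization of non-vanishing Betti numbers supplied by Theorem~\ref{lem:lem15}. If $\beta_{i,j}(R/I_t(C_n)) \neq 0$ with $j < n$, there must exist an $(i,j)$-eligible subcollection $\Gamma$ of $\D_t(C_n)$. Writing its run lengths as in Definition~\ref{d:eligible} with parameters $\alpha, \beta, p_1, \ldots, p_\alpha, q_1, \ldots, q_\beta$, and setting $M = P + Q + \beta$ together with $r = \alpha + \beta$, the defining identities collapse to
\[
i \ = \ 2M + \alpha, \qquad j \ = \ (t+1)M + t\alpha.
\]

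Part (1) will fall out of the one-line algebraic identity $j - it = (1-t)M \leq 0$, using $t \geq 2$ and $M \geq 0$.

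For parts (2), (3), and (4), the essential extra input is a geometric vertex-counting bound. Since $\Gamma$ consists of $r$ pairwise disjoint runs inside the cycle $C_n$, and $\Gamma$ is an induced subcollection, I would argue that each of the $r$ gaps between consecutive runs around $C_n$ must contain at least one vertex outside $\ver(\Gamma)$; otherwise that vertex could be adjoined and its new facet would be forced into $\Gamma$ by induced-ness, merging two distinct runs and contradicting their maximality as connected components. Hence $j + r \leq n$, which after substitution becomes
\[
(t+1)(M + \alpha) + \beta \ \leq \ (t+1)p + d.
\]
Since $0 \leq d \leq t < t+1$, one concludes $M + \alpha \leq p$ in every case, with strict inequality $M + \alpha \leq p-1$ whenever $\beta > d$. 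Part (2) is then immediate from $j - i = (t-1)(M+\alpha) \leq (t-1)p$, and part (4) from $i = 2(M+\alpha) - \alpha \leq 2(M+\alpha) \leq 2p \leq 2p+1$.

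The hardest step is (3): securing the strict inequality $i < 2p$ when $d = 0$. In this case the inequality above becomes $(t+1)(M+\alpha) + \beta \leq (t+1)p$, so the equality $M+\alpha = p$ forces $\beta = 0$, hence also $Q = 0$ and $M = P$; if additionally $\alpha = 0$ then $P = 0$, contradicting $M + \alpha = p \geq 1$ (noting that $d=0$ and $n\ge t+1$ give $p\ge 1$). Thus $\alpha \geq 1$ and $i = 2p - \alpha \leq 2p - 1 < 2p$. When instead $M + \alpha \leq p - 1$, the even stronger estimate $i \leq 2(M+\alpha) \leq 2p - 2 < 2p$ applies. The main obstacle is precisely this boundary-case bookkeeping: the naive substitution only gives $i \leq 2p$, and one must track exactly when $\beta = 0$ and $\alpha = 0$ simultaneously fail in order to upgrade to the strict inequality promised by (3).
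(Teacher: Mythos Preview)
Your proof is correct and follows essentially the same strategy as the paper: invoke Theorem~\ref{lem:lem15} to obtain an $(i,j)$-eligible subcollection, rewrite the defining identities, and combine them with a gap-counting bound. Your ``at least one vertex per gap'' is exactly the paper's ``at least $t$ facets between consecutive runs'': both translate to $(t+1)(P+Q+\alpha+\beta)+\beta \leq n$, and both yield part~(2) the same way.

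The one place where the arguments diverge is in deriving (3) and (4). The paper does not use the gap bound here at all; instead it notes that $j-i \geq ti-j$ (equivalent to your $M \geq 0$), which gives $i(t+1) \leq 2j < 2n = 2(t+1)p + 2d$, and the two inequalities $i<2p$ (when $d=0$) and $i \leq 2p+1$ (when $d\neq 0$) drop out in one line with no case analysis. Your route via $M+\alpha \leq p$ and the boundary case $M+\alpha = p$ is longer but equally valid; the paper's shortcut simply exploits $j<n$ more directly.
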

\begin{proof} The fact that $j\leq it$ is just~(\ref{e:gtti}). 
By using Theorem~\ref{lem:lem15} we know $\beta_{i,j}(R/I_t(C_n)$ is
equal to the number of $(i,j)$-eligible subcollections of
$\Delta_t(C_n)$. So if we assume $\beta_{i,j}(R/I_t(C_n))\neq 0$ we
can conclude there exists a $(i,j)$-eligible subcollections $C$ of
$\Delta_t(C_n)$ which is composed of runs of lengths as described in
(\ref{eqn:length1}).  Therefore 
\begin{eqnarray}\label{repeatedequation}
j-i=(t-1)(P+Q+\alpha+\beta)& \mbox{ and }& ti-j=(t-1)(P+Q+\beta).
\end{eqnarray}
It follows that $j-i \geq ti-j$ so $$i(t+1) \leq 2j \Rightarrow i \leq 2 \left (\frac{j}{t+1} \right ) < 2 \left (\frac{p(t+1)+d}{t+1} \right )$$
so if $d=0$ it follows that $i<2p$ and if $d\neq 0$ it follows that $i\leq 2p+1$.

 On the other hand since $\Delta_t(C_n)$ has $n$ facets and since
 there must be at least $t$ facets between every two runs in $C$, we
 have
$$n\geq (t+1)P+(t+1)Q+\alpha+2\beta+t\alpha+t\beta \geq
(t+1)(P+Q+\alpha+\beta)=\left(\frac{t+1}{t-1}\right)(j-i)$$ which
implies that 
$$\frac{j-i}{t-1} \leq p+\frac{d}{t+1}$$ and since from
(\ref{repeatedequation}) we have $\frac{j-i}{t-1}$ is an integer, we must
have $\frac{j-i}{t-1} \leq p$. The inequality in (2) now follows.
\end{proof}

In~\cite{AF2012} we use combinatorial techniques to calculate the
remaining Betti numbers.

We end the paper with the computation of the projective dimension and
regularity of path ideals of cycles.  The case $t=2$ is the case of
graphs which appears in Jacques~\cite{Jacques2004}.

\begin{col}[{\bf Projective dimension and regularity of path ideals of cycles}]\label{c:pdr}
 Let $n$, $t$, $p$ and $d$ be integers such that $n\geq 2$, $2\leq t
\leq n$, $n=(t+1)p+d$, where $p\geq 0$, $0\leq d \leq t$. Then
 \renewcommand{\theenumi}{\roman{enumi}}
\begin{enumerate}
\item The projective dimension of the path ideal of a graph cycle
  $C_n$  is given by
$$pd(R/I_t(C_n))=
\left\{\begin{array}{ll}
2p+1& d \neq 0~\vspace{.1 in}\\
2p&d=0
\end{array}\right. \hspace{.5in}
$$
\item The regularity of the path ideal of the graph cycle
  $C_n$  is given by
$$reg(R/I_t(C_n))=\left\{\begin{array}{lll}
                       (t-1)p+d-1&\mbox{for}& d\neq 0 \\
                        (t-1)p&\mbox{for}& d=0 
                              \end{array}\right..$$
\end{enumerate}
\end{col}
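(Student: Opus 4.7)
The plan is to combine two preceding results: Theorem~\ref{theorem:theorem6.2} determines the top-strand Betti numbers $\beta_{i,n}(R/I_t(C_n))$ exactly, while Theorem~\ref{nonzerobetinumbers} constrains the indices $(i,j)$ for which $\beta_{i,j}(R/I_t(C_n))\neq 0$ in the range $j<n$. Since $pd(R/I_t(C_n))$ is the largest $i$ with some $\beta_{i,j}$ nonzero, and $reg(R/I_t(C_n))$ is the largest value of $j-i$ over such pairs, the corollary will follow by comparing the top-strand values against the lower-strand bounds.

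First I will read off lower bounds from Theorem~\ref{theorem:theorem6.2}. If $d=0$ then $\beta_{2p,n}(R/I_t(C_n))=t\neq 0$, so $pd\geq 2p$ and $reg\geq n-2p=(t-1)p$. If $d\neq 0$ then $\beta_{2p+1,n}(R/I_t(C_n))=1\neq 0$, so $pd\geq 2p+1$ and $reg\geq n-(2p+1)=(t-1)p+d-1$. These already give the claimed formulas as lower bounds.

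Next I will establish matching upper bounds using Theorem~\ref{nonzerobetinumbers}, which applies whenever $i\leq j<n$. For the projective dimension: when $d=0$ that theorem forces $i<2p$ on every lower strand, so no $\beta_{i,j}$ with $j<n$ can match the top-strand value $i=2p$; when $d\neq 0$, the bound $i\leq 2p+1$ already matches the top-strand value $i=2p+1$. In either case $pd$ equals the lower bound obtained above. For the regularity, the bound $j-i\leq(t-1)p$ for $j<n$ exactly equals the top-strand regularity contribution when $d=0$, and is strictly smaller than $(t-1)p+d-1$ when $d\neq 0$ (since $d\geq 1$); so the maximum of $j-i$ is realized on the top strand, and $reg$ agrees with the lower bound.

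I do not anticipate any serious obstacle: with Theorems~\ref{theorem:theorem6.2} and~\ref{nonzerobetinumbers} in hand, the argument is simply a matter of extracting the extremal $(i,j)$ from the top strand and checking, case by case on whether $d=0$ or $d\neq 0$, that the lower-strand bounds from Theorem~\ref{nonzerobetinumbers} never exceed these top-strand values.
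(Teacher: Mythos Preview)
Your proposal is correct and follows essentially the same route as the paper: obtain lower bounds on $pd$ and $reg$ from the degree-$n$ Betti number in Theorem~\ref{theorem:theorem6.2}, obtain matching upper bounds for strands $j<n$ from Theorem~\ref{nonzerobetinumbers}, and compare. One cosmetic slip: when $d=1$ the lower-strand bound $(t-1)p$ is not \emph{strictly} smaller than $(t-1)p+d-1$ but equal to it; this does not affect the argument, since the maximum is still attained on the top strand.
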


  \begin{proof} \renewcommand{\theenumi}{\roman{enumi}}
    \begin{enumerate}
     \item This follows from Theorem~\ref{theorem:theorem6.2} and
       Theorem~\ref{nonzerobetinumbers}.
    \item By definition, the regularity of a module $M$ is $\max \{j-i
      \st \beta_{i,j}(M)\neq 0\}$. By Theorem~\ref{theorem:theorem6.2}
      and Theorem~\ref{nonzerobetinumbers} and the observation above
      if $d=0$ then $reg(R/I_t(C_n))$ is
    $$ \max \{n-2p,
    (t-1)p\}=\max\{(t+1)p-2p,(t-1)p\}=(t-1)p$$ and if $d\neq
    0$ then $reg(R/I_t(C_n))$ is 
     $$\max \{n-2p-1,
   (t-1)p\}=\max\{(t+1)p+d-2p-1,(t-1)p\}=(t-1)p+d-1.$$ The formula
    now follows.

    \end{enumerate}

  \end{proof}

\bibliographystyle{plain}
\bibliography{newpath}

\end{document}